\newtheorem{theorem}{Theorem}
\newtheorem{corollary}{Corollary}[theorem]
\newtheorem{definition}{Definition}[theorem]
\newtheorem*{remark}{Remark}
\newcommand{\ubar}[1]{\underaccent{\bar}{#1}}
\newcommand*{\probability}{\mathbb{P}}
\newcommand*{\compon}{i}
\newcommand*{\scen}{s}
\newcommand*{\bbnode}{\iota}
\newcommand*{\treenode}{\eta}
\newcommand*{\alloclevel}{\ell}
\newcommand*{\componoutcome}{\kappa}
\newcommand*{\leaf}{\lambda}
\newcommand*{\pathindex}{\delta}
\newcommand*{\secondstagerhs}{b}
\newcommand*{\secondstagerhsvec}{\textbf{\secondstagerhs}}
\newcommand*{\costalloc}{c}
\newcommand*{\costallocvec}{\textbf{\costalloc}}
\newcommand*{\secondstagecost}{q}
\newcommand*{\secondstagecostvec}{\textbf{\secondstagecost}}
\newcommand*{\capac}[1][]{u_{#1}}
\newcommand*{\capaci}{\capac[\compon]}
\newcommand*{\bigmval}[1][]{M_{#1}}
\newcommand*{\bigmvali}{\bigmval[\compon]}
\newcommand*{\numcompon}{n}
\newcommand*{\numalloclevels}{L}
\newcommand*{\numscens}{S}
\newcommand*{\numoutcomes}{K}
\newcommand*{\numfirststagevars}{n_1}
\newcommand*{\numsecondstagevars}{n_2}
\newcommand*{\numsecondstagemainconstrs}{m}
\newcommand*{\firststagevar}{x}
\newcommand*{\firststagevec}{\textbf{\firststagevar}}
\newcommand*{\firststagegenericvec}{\tilde{\textbf{\firststagevar}}}
\newcommand*{\firststagegenerici}{\tilde{\firststagevar}_\compon}
\newcommand*{\firststageallocvar}{x}
\newcommand*{\firststagealloc}[1][]{\firststageallocvar_{#1}}
\newcommand*{\firststageallocil}{\firststagealloc[\compon\alloclevel]}
\newcommand*{\firststageallocvec}{\textbf{\firststageallocvar}}
\newcommand*{\secondstagevar}{y}
\newcommand*{\secondstagevec}{\textbf{\secondstagevar}}
\newcommand*{\auxvar}{\theta}
\newcommand*{\capdualvar}{\beta}
\newcommand*{\capdual}[1][]{\capdualvar_{#1}}
\newcommand*{\capduali}{\capdual[\compon]}
\newcommand*{\statevar}{\xi}
\newcommand*{\staterealizvar}{\statevar}
\newcommand*{\staterealiz}[2]{\staterealizvar_{#1}^{#2}}
\newcommand*{\staterealizi}{\staterealiz{\compon}{}}
\newcommand*{\staterealizis}{\staterealiz{\compon}{\scen}}
\newcommand*{\otherrandvar}{\gamma}
\newcommand*{\recfn}{\Theta}
\newcommand*{\recfnpenalty}{\tilde{\Theta}}
\newcommand*{\probfn}{\probability}
\newcommand*{\stateprobfn}[2]{f_{#1}(#2)}
\newcommand*{\stateprobfndiscri}{\stateprobfn{\compon}{\staterealizi;\alloclevel}}
\newcommand*{\stateprobfndiscrik}{\stateprobfn{\compon}{\componoutcome;\alloclevel}}
\newcommand*{\stateprobfndiscris}{\stateprobfn{\compon}{\staterealizis;\alloclevel}}
\newcommand*{\ev}{\mathbb{E}}
\newcommand*{\parttree}{\mathcal{T}}
\newcommand*{\parttreeroot}{\mathcal{T}^0}
\newcommand*{\parttreefull}{\mathcal{T}^F}
\newcommand*{\firststagefeasregion}{X}
\newcommand*{\firststagegenericfeasregion}{\tilde{X}}
\newcommand*{\firststageallocfeasregion}{X}
\newcommand*{\randvecsupport}{\Xi}
\newcommand*{\components}{I}
\newcommand*{\fixedcomponents}{F}
\newcommand*{\treenodesonpath}{\mathcal{P}}
\newcommand*{\descendentnodes}{\mathcal{D}}
\newcommand*{\parttreenodes}{\mathcal{N}}
\newcommand*{\parttreeleafnodes}{\mathcal{L}}
\newcommand*{\parttreearcs}{\mathcal{A}}
\newcommand*{\secondstagematrix}{W}
\newcommand*{\alloclevelssum}{\sum_{\alloclevel=0}^\numalloclevels}
\newcommand*{\probprod}{\prod_{\compon\in \components}}
\newcommand*{\scenssum}{\sum_{s=1}^\numscens}
\newcommand*{\outcomessum}{\sum_{\componoutcome=0}^\numoutcomes}
\title{A Successive Refinement for Solving Stochastic Programs with Decision-Dependent Random Capacities
}
\author{
  Hugh Medal, Samuel Affar \\
  Department of Industrial Engineering \\
  University of Tennessee \\
  Knoxville\\
  \texttt{hmedal@utk.edu, saffar@vols.utk.edu} \\
}
\begin{document}
\maketitle

\begin{abstract}
We study a class of two-stage stochastic programs in which the second stage includes a set of components with uncertain capacity, and the expression for the distribution function of the uncertain capacity includes first-stage variables. Thus, this class of problems has the characteristics of a stochastic program with decision-dependent uncertainty. A natural way to formulate this class of problems is to enumerate the scenarios and express the probability of each scenario as a product of the first-stage decision variables; unfortunately, this formulation results in an intractable model with a large number of variable products with high-degree. After identifying structural results related to upper and lower bounds and how to improve these bounds, we present a successive refinement algorithm that successively and dynamically tightens these bounds. Implementing the algorithm within a branch-and-cut method, we report the results of computational experiments that indicate that the successive refinement algorithm significantly outperforms a benchmark approach. Specifically, results show that the algorithm finds an optimal solution before the refined state space become too large.
\end{abstract}

\keywords{stochastic programming \and decision-dependent uncertainty \and multilinear programming \and delayed row generation \and sequential approximation}

\section{Introduction} \label{intro}
Stochastic programming is a pivotal methodology for optimizing under uncertainty, enabling decision-makers to devise robust plans by creating models that capture various possible outcomes with associated likelihoods. Stochastic Programming with Decision-Dependent Uncertainty (SP-DDU) incorporates the reality that the decisions made at a certain time can influence the uncertainty faced later. Take, for instance, the energy sector, where the strategic planning for generation expansion directly affects the volatility of future electricity prices. Incorporating SP-DDU in this context allows for the probabilities of future price fluctuations to be influenced by present-day renewable energy investments \cite{Zhan.Pinson.2017}. 
In service industries such as airline and hospitality, SP-DDU can be used to determine service overbooking levels, thereby avoiding redundant capacity due to cancellations. The number of customers who show up to receive the service without cancellations depends on how much the service is overbooked \cite{karaesmen2004overbooking}. 

The interaction between decision variables and uncertainties often results in non-convex objective functions and constraints, leading to computational challenges. Thus, solving SP-DDU problems necessitates developing and applying new mathematical techniques to solve these models. In this paper, we focus on a class of SP-DDU that we refer to as stochastic programming with endogenous probabilities (SP-EP), in which decision variables directly alter the distribution of uncertain parameters. Within this category of problems the probability for each scenario is determined by a product of probabilities, with each probability being a function of decision variables. 
 
Specifically, this study provides a solution procedure for a subset of SP-EP problems that we term \textit{stochastic programs with decision-dependent random capacities} (SP-DDRC). In these problems the the goal is to optimize the expected value of an objective that depends on the (random) capacities of each component in the set $\components=\{1,\dots,\numcompon\}$. The decision is to allocate resources among the components, given that the probability distribution of the capacity of a component depends on the amount of resources allocated to the component. Problems in this class include network optimization problems such as allocating protection (attack) resources to arcs to maximize (minimize) the expected maximum flow in a network. This class also includes the problem of allocating protection (attack) resources to facilities to maximize (minimize) the expected coverage.
 
A formulation for this class of problems is:

\begin{mini}|s|
    {\firststagegenericvec\in \firststagegenericfeasregion}{\costallocvec^T\firststagegenericvec + \sum_{\staterealizvar\in \randvecsupport} \probability[\staterealizvar \,|\, \firststagegenericvec]\,\recfn(\staterealizvar),}{\label{mod:first-stage}}{}
\end{mini}

 \noindent where $\costallocvec\in\mathbb{R}^{\numfirststagevars}$, $\firststagegenericvec$ is a vector of first-stage decisions that represent the allocation of a scarce set of resources. This vector has feasible region $\firststagegenericfeasregion$ that incorporates resource scarcity constraints as well as other constraints. The core algorithmic framework that we present in this paper is designed to enhance other method for solving \eqref{mod:first-stage} rather than completely solve \eqref{mod:first-stage} by itself. Thus, our framework does not itself require particular structure in the set $\firststagegenericfeasregion$ such as convexity. However, if this framework is used to solve a problem for complicated, e.g., non-convex, feasible regions $\firststagegenericfeasregion$, it must be used within a method that can handle such a complicated $\firststagegenericfeasregion$. That said, the test problem that we consider in our computational study does have a convex set $\firststagegenericfeasregion$.
 
 In this work we focus on problems in which the second-stage problem contains a set of components $1\dots,\numcompon$, each having a discrete random capacity level $\staterealizi\in\{0,1,\dots,\numoutcomes\}$. In network problems components could represent nodes or arcs, and in facility location problems components could represent facilities. So, $\staterealizvar$ is the realization of a random vector representing the capacity levels of each component, and this vector has support $\randvecsupport$. 
 
 The term $\probability[\staterealizvar \,|\, \firststagegenericvec]$ is the probability of realization $\staterealizvar$ conditional on the first-stage decisions $\firststagegenericvec$, which implies that the uncertainty in the realization $\staterealizvar$ depends on the first-stage decisions; thus, this formulation is a stochastic program with decision-dependent uncertainty. 
 
 A key assumption in our study is that the random component capacities are mutually independent. Letting $\stateprobfn{\compon}{\staterealizi;\firststagegenericvec}$ be the probability that component $\compon$ is in state $\staterealizi$ given resource allocation $\firststagegenericvec$, the probability of realization $\staterealizvar$ is:

 \begin{equation}
	\probfn[\staterealizvar \,|\,\firststagegenericvec] =\prod_{\compon=1}^ \numcompon \stateprobfn{\compon}{\staterealizi;\firststagegenericvec}. \label{probExpr}
\end{equation}

\noindent While there are problems with decision-dependent random capacities for which independence does not hold, this independence assumption still applies to many important problems. In addition, as we will describe in our discussion of possible extensions (see \S\ref{conclusion}), it is possible to model correlated component capacities within our framework if the random capacities are \textit{conditionally independent}, i.e., the capacities are mutually independent conditional upon some event (see also \cite{Medal.2015}). For instance, in the case that the capacities are degraded due to a weather event, it could be reasonable to assume the post-event capacities of components are mutually independent, conditional upon the location and severity of the weather event.

The recourse function $\recfn$ in \eqref{mod:first-stage} only depends on the realization vector $\staterealizvar$, unlike classic stochastic programming formulations in which this function also depends on the first-stage decisions. This recourse function is formulated as:

\begin{mini!}|s|
    {}{\secondstagecostvec^T\secondstagevec \label{eq:obj-recourse}}{\label{mod:recourse}}{\recfn(\staterealizvar) =}
    \addConstraint{\secondstagematrix\secondstagevec}{= \secondstagerhsvec \label{constr:recourse}}
    \addConstraint{\secondstagevar_\compon}{\leq \capaci\,\staterealizi}{\quad\forall\compon=1,\dots,\numcompon\quad [\beta_i]\label{constr:capacity}}
    \addConstraint{\secondstagevec}{\geq 0.}{\label{bound}}
\end{mini!}

\noindent where $\secondstagecostvec\in\mathbb{R}^{\numsecondstagevars}$, $\secondstagevec\in\mathbb{R}^{\numsecondstagevars}$ is a vector of second-stage decisions, $\secondstagematrix\in\mathbb{R}^{\numsecondstagemainconstrs\times\numsecondstagevars}$, and $\secondstagerhsvec\in\mathbb{R}^{\numsecondstagemainconstrs}$. Constraints \eqref{constr:capacity} represent the fact that the random capacity of each component $\compon$ in the system depends on the value of the random variable $\staterealizi$. The $\capduali$ variables in brackets represent the dual multipliers of these constraints.

Our approach works best for problems in which a tight upper bound can be found for the dual multipliers $\capduali$. Recourse problems for which this is the case include the maximum flow problem and the maximum coverage assignment problem with capacitated facilities; for these problems $\capdual\leq 1$.

Solving \eqref{mod:first-stage} is challenging for several reasons. First, modeling the probability of each realization $\staterealizvar$, $\probability[\staterealizvar \,|\, \firststagegenericvec]$, will typically require non-convex multilinear terms resulting from multiplying the $\stateprobfn{\compon}{\staterealizi;\firststagegenericvec}$ terms over all $i=1,\dots,\numcompon$. These multilinear terms are expanded even more by the fact that formulating the deterministic equivalent form of \eqref{mod:first-stage} results in an objective function in which the multilinear $\probability[\staterealizvar \,|\, \firststagegenericvec]$ term for a realization $\staterealizvar$ is also multiplied by the second-stage objective \eqref{eq:obj-recourse} for that realization. Although linearizing these multilinear terms is easier if the $\firststagegenericvec$ is assumed to be a binary vector (which we assume in this paper), this linearization still requires adding many additional constraints and auxiliary variables to iteratively form McCormick envelopes. While these envelopes are exact for binary solutions, the linear programming relaxation can be quite weak, as we found in our computational experimentation.

 While prior research in SP-EPs has solved approximate versions of \eqref{mod:first-stage} or solved it heuristically, this study represents the first exact approach for solving this problem. We introduce a successive refinement algorithm (SRA), which accurately captures these probability functions while overcoming the computational challenges. This method enables us to derive tighter bounds for two-stage stochastic programs with endogenous probabilities. We validate the efficiency of our approach through extensive testing on various test problems and benchmark comparisons.
 
 \section{Related Work}
 \label{related-work}
 Research on stochastic programming with DDU can be classified according to decisions' effect on uncertainty \cite{Goel.Grossmann.2006}. In the first type, the decisions affect the timing that uncertainty is realized in multi-stage stochastic programming. Specifically, the non-anticipativity constraints (NACs) can be negated by activating a variable via disjunctive constraints \cite{Goel.Mulkay.2006,Colvin.Maravelias.2009}. Thus, we refer to this area of research as stochastic programming with endogenous uncertainty realization (SP-EUR). In the second type, which pertains to Distributionally Robust Optimization Models, the decisions have influenced the ambiguity and distribution of uncertainty sets \cite{Luo.Mehrotra.2020}. This category is called Robust Optimization with Decision-Dependent Ambiguity (RO-DDA). In the third type,  which is the focus of this study, the decisions alter the probability distribution governing the uncertainty. Hence, we term this area stochastic programming with endogenous probabilities (SP-EP). SP-EP has received less attention in the literature than SP-EUR, perhaps because it introduces significant non-convexity and finding an explicit and tractable representation of the objective function is difficult.

The literature uses various approaches for SP-EPs, including approximations and decoupling techniques, to avoid nonlinearity before solving the problem using a heuristic or sampling approach. Some studies apply decoupling techniques for the objective to avoid non-convexity and use a sampling technique or an heuristic to solve the resulting model, which usually has a larger sample space. This approach is common in power grid planning and distribution \cite{Ma.Guo.2017,Zhang.Cao.2023}. Ma et al. \cite{Ma.Guo.2017} decoupled the decision-dependence between power distribution resilient-oriented design by decoupling the line failure status into two independent parameters representing separate component outcomes (denoting line status with or without hardening) and expanding the scenario space. The authors used a progressive hedging algorithm to solve the simplified model. Zhang et al. \cite{Zhang.Cao.2023} used the same decoupling approach for a similar problem where hardening decisions are made to make the power grid more resilient but utilized Sample Average Algorithm (SAA) to solve the resulting stochastic MIP. Similarly, Yin et al. \cite{Yin.Hou.2023q6f} approached renewable energy expansion planning using a decision-dependent stochastic model that factors the co-dependence relationship between wind farm capacity expansion decisions and wind power uncertainties. An affine function based on historical data was used to show the relationship between wind power output and the expansion decision. To handle scenario generation,  a scenario set of the power wind output was generated for every possible expansion decision, enlarging the scenario sample space, and the resulting model was solved using a progressive hedging algorithm. Hu et al. \cite{Hu.Peng.2020} proposed a decision-dependent model that considers the relationship between operational decision variables and force outage rates of electrical components. The proposed model can be decoupled into stages, making it suitable for an Adaptive reliability improvement unit commitment algorithm (ARIUC). It refines the decision variable values for each model stage based on new information from the other stage. These decoupling techniques involve enlarging the sample space by introducing new parameters and increasing the number of scenarios, making the resulting decision-independent models less scalable. 

Another type of approach for solving SP-EPs is to discretize the first-stage variables and then linearize the multilinear terms using a technique called a probability chain \cite{Losada.2012,OHanley.2013}. Medal et al. \cite{Medal.2015} studied a two-stage facility location stochastic program with endogenous uncertainty. Each facility's post-disruption capacity level depends on the protection allocation level assigned to the facility in the first stage. Because of the decision-dependent uncertainty, the model's first stage is independent of the second stage, such that for a fixed first-stage solution, the optimal second-stage cost can be computed. The models is  linearized using a sequential set of constraints to linearize and calculate the probability-weighted cost for each scenario. This linearized reformulation allows the problem to be recast into a mixed-integer program that can be solved exactly. Zhou et al. \cite{Zhou.2022}  utilized the same approach for a supplier development program selection problem where supplier performance depends on the decision maker's level of development investment in the supplier. The same technique is used in \cite{Bhuiyan.2019} to solve a fuel incentivization reduction program to reduce wildfires where a landowner's acceptance probability depends on the offer amount decided in the first stage. Similarly, Bhuiyan et al. \cite{Bhuiyan.2020} utilize the same recursive linearization technique for a reliable network design problem and an accelerated L-shaped decomposition algorithm to solve the resulting two-stage stochastic MILP. However, the recursive linearization technique increases the size of the resulting MIPs significantly, making the approach less scalable. 

Some studies have also used tractable (e.g., convex) approximations to resolve the decision dependence in SP-EPs. Yin et al. \cite{Yin.Hou.2023} studied the same problem in \cite{Yin.Hou.2023q6f} with but reduced the scenario sample space using a Point Estimate Method (PEM) based model, which utilizes special point concentrations instead of the entire probability density function, approximating the expectation of the second stage objective across the uncertain parameter. An iterative algorithm that utilizes Bender's decomposition in each iteration solves the model. Peeta et al. \cite{Peeta.Viswanath.2010} studied a network protection problem as an SPEU, approximating the objective function using a Taylor series expansion, which resolves the non-convexity caused by SPEU. This approximate model was then solved using sample average approximation. Because they approximate the objective function, their approach does not guarantee optimality or provide a performance guarantee. 

Other studies focus on finding locally, rather than globally, optimal solutions. Karaesmena and Ryzin \cite{karaesmen2004overbooking} studied the overbooking problem for multiple inventory classes and reservations where the number of customers who show up depends on the overbooking level in the first stage, which was assuming to be a continuous quantity. A locally optimal method was proposed that uses a stochastic gradient descent algorithm along with a gradient estimation scheme and a projection step.

Other studies have developed heuristic algorithms to solve SP-EPs. Held et al. \cite{Held.Woodruff.2005} studied a multi-stage stochastic network interdiction problem in which the first-stage decisions determine the probabilities of the random scenarios, using a heuristic to solve the problem. Relaxing assumptions made in \cite{Peeta.Viswanath.2010}, \cite{Du.Peeta.2014} used an iterative heuristic algorithm in conjunction with Monte Carlo simulation.

SP-DDRC is an important class of problems in stochastic programming literature several important applications. A good example is the well-studied stochastic network interdiction model, which has applications in cyber-physical systems security \cite{10.1016/j.ejor.2019.06.024} and interdicting illicit supply chain networks \cite{10.1016/j.cie.2021.107708}. Similarly, stochastic network protection is deployed for critical infrastructure protection planning \cite{10.1016/j.cor.2006.09.019}. Different variations of SP-DDRC problems have been studied, but none incorporate endogenous probabilities. Cormican et al. \cite{Cormican.98} provided the seminar work on stochastic network interdiction problem, studying a maximum flow network interdiction problem in which interdiction attempts on arcs have a probability of success. This work differs from ours in that, while their study consider fixed values of the interdiction success probability, in SP-DDRC problems the random capacity of components has a probability distribution that depends on the first-stage variables. That said, the successive refinement algorithm that we present in this study is inspired by the sequential approximation algorithm provided by \cite{Cormican.98}.

In summary, there is limited research on stochastic programming with endogenous probabilities (SP-EPs) compared to stochastic programming with endogenous uncertainty realization and robust optimization with decision-dependent ambiguity sets. Moreover, no study incorporates endogenous probabilities into SP-DDRC problems. The existing literature on SP-EPs has largely avoided the issue of high nonlinearity by using decoupling, recursive nonlinearization, or approximations that do not scale properly or fail to represent the interdependency in SP-EPs. These issues highlight the need for scalable and efficient solution techniques for SP-DDRC problems.

\section{Problem Description and Benchmark Method}\label{prob-desc}
Before describing our method, we first present a formulation of \eqref{mod:first-stage} as a multilinear program that can be linearized straightforwardly and solved via a commercial solver such as Gurobi. 

In the remainder of this paper we make two additional assumptions. First, for ease of exposition, we assume that $\numfirststagevars=\numcompon$ so that there is a one-to-one and onto mapping between first stage variables in the vector $\firststagegenericvec$ and components $\compon=1,\dots,\numcompon$. Thus, $\probfn[\staterealizvar \,|\,\firststagegenericvec] =\prod_{\compon=1}^ \numcompon \stateprobfn{\compon}{\staterealizi;\firststagegenerici}$. This assumption is straightforward to relax. 

Second, we assume that the vector $\firststagegenericvec$ is integer-valued and each variable $\firststagegenericvec\in\{0,1,\dots,\numalloclevels\}$. Under this assumption, $\firststagegenericvec$ can be represented as the sum of binary variables multiplied by coefficients, i.e., $\firststagegenerici=\alloclevelssum \alloclevel \firststageallocil$, where $\firststageallocil$ is a binary variable that is $1$ if $\alloclevel$ units are allocated to component $\compon$ and $0$ otherwise. Given this assumption, we now use the notation $\firststageallocvec=\left(\firststageallocil\right)_{i=1,\dots,\numcompon,\ell=0,\dots,\numalloclevels}$ to denote the binary version of the vector of first-stage decisions and the notation $\firststageallocfeasregion$ to denote its feasible region. Thus, the probability of a realization can be represented as:

\begin{equation}
	\probfn[\staterealizvar \,|\,\firststageallocvec] =\probprod \alloclevelssum \stateprobfndiscri \firststageallocil, \label{probExpr-discrete}
\end{equation}

\noindent given that a constraint is included in $\firststagefeasregion$ that requires exactly one allocation level for each component, i.e., $\alloclevelssum \firststageallocil$ for all $\compon=1,\dots,\numcompon$. When the variables in $\firststageallocvec$ are binary, the  McCormick envelopes used to linearize products of variables are exact for binary solutions. Thus, this assumption allows us to use our method within modern branch-and-cut solvers such as Gurobi.

Given the aforementioned assumptions, problem \eqref{mod:first-stage} can be formulated as the following deterministic equivalent stochastic program with a multilinear objective. First, we enumerate the realizations of $\staterealizvar$ and index them by scenario, i.e., $\staterealizvar^1,\dots,\staterealizvar^\numscens$ and make copies of $\secondstagevec$ for each scenarios $\scen$. Then the formulation is:

\begin{mini!}|s|
    {\firststagevec\in \firststagefeasregion}{\costallocvec^T\firststageallocvec + \scenssum \left( \probprod \alloclevelssum \stateprobfndiscris \firststageallocil\right) \secondstagecostvec^T\secondstagevec^\scen \label{eq:obj-multi}}{\label{mod:multi-program}}{}
    \addConstraint{\secondstagematrix\secondstagevec^\scen}{= \secondstagerhsvec}{\quad\forall\scen=1,\dots\numscens\label{constr:recourse-multi}}
    \addConstraint{\secondstagevar_\compon^\scen}{\leq \capaci\,\staterealizis}{\quad\forall\compon=1,\dots,\numcompon; \scen=1,\dots\numscens \label{constr:capacity-multi}}
    \addConstraint{\secondstagevec^\scen}{\geq 0}{\quad \forall\scen=1,\dots\numscens.\label{bound-multi}}
\end{mini!}

Unfortunately, this deterministic equivalent model \eqref{mod:multi-program} is challenging to solve because of the multilinear terms in the objective \eqref{eq:obj-multi}, even though the $\firststageallocil$ variables are binary. Our computational testing (see \S\ref{results}) found that this formulation is only able to solve small problems. Thus, in \S\ref{struct-results} we describe structural properties present in the formulation \eqref{mod:first-stage} and then in \S\ref{sra} we leverage these properties to develop a method that alleviates some of the challenges with solving this deterministic equivalent formulation.

\section{Structural Results}\label{struct-results}

\subsection{Elementary Lower and Upper Bounds}
We first observe that by elementary linear programming theory, the recourse function $\recfn$ is convex over $\randvecsupport$ and thus, for a fixed $\firststageallocvec$, a \textit{mean value recourse problem} yields a lower bound on the true objective value for $\firststageallocvec$ due to Jensen's inequality, i.e.,

\begin{equation} \label{eqn:lb}
    \recfn(\bar{\staterealizvar}(\firststageallocvec)) \leq \ev[\recfn(\staterealizvar)\, |\,\firststageallocvec ],
\end{equation}

\noindent where $\bar{\staterealizvar}(\firststageallocvec) = \ev[\staterealizvar\,|\,\firststageallocvec]$. The mean value recourse problem is

\begin{mini!}|s|
    {}{\secondstagecostvec^T\secondstagevec \label{eq:mvp}}{\label{mod:mvp-recourse}}{\recfn(\bar{\staterealizvar}(\firststageallocvec)) =}
    \addConstraint{\secondstagematrix\secondstagevec}{= \secondstagerhsvec \label{constr:mvp-recourse}}
    \addConstraint{\secondstagevar_\compon}{\leq \capaci\,\bar{\staterealizi}(\firststageallocvec)}{\quad\forall\compon=1,\dots,\numcompon\label{constr:mvp-capacity}}
    \addConstraint{\secondstagevec}{\geq 0,}{\label{constr:mvp-bound}}
\end{mini!}

\noindent and $\ev[\recfn(\staterealizvar)\, |\,\firststageallocvec ] = \sum_{\staterealizvar\in \randvecsupport} \probability[\staterealizvar \,|\, \firststagevec]\,\recfn(\staterealizvar)$.

Next, to obtain an upper bound, we define a diagonal matrix $\mathbf{M}=\textbf{diag}(\{\bigmvali\, : \, i\in \components\})$ where $\bigmvali$ is an upper bound on the dual multipliers $\capduali$ in \eqref{mod:recourse}. Then, we observe that for problems of the form \eqref{mod:recourse} there exists the following \textit{penalized reformulation}:

\begin{mini!}|s|
    {}{\secondstagecostvec^T\secondstagevec + \staterealizvar^T \mathbf{M} \secondstagevec \label{eq:penalty-mvp}}{\label{mod:penalty-mvp}}{\recfnpenalty(\staterealizvar) =}
    \addConstraint{\secondstagematrix\secondstagevec}{= \secondstagerhsvec}{\label{constr:penalty-mvp:recourse}}
    \addConstraint{\secondstagevec}{\geq 0,}{\label{constr:penalty-mvp:bound}}
\end{mini!}

\noindent such that $\recfnpenalty(\staterealizvar) = \recfn(\staterealizvar)$ over $\randvecsupport$ and $\recfnpenalty$ is \textit{concave} over $\randvecsupport$ (see \cite{10.1016/j.ejor.2019.06.024,Cormican.98}). Thus, for a fixed $\firststageallocvec$ we again apply Jensen's inequality to obtain an upper bound:

\begin{equation} \label{eqn:ub}
\ev[\recfnpenalty(\staterealizvar)\,|\,\firststageallocvec] \leq \recfnpenalty(\bar{\staterealizvar}(\firststageallocvec)).
\end{equation}

Foreshadowing our discussion of the successive refinement algorithm described in the next section, the algorithm works best when $\bigmvali$ is a tight upper bound on $\capduali$. This is the case for recourse problems such as the maximum flow problem or the maximum coverage assignment problem with capacitated facilities; for both of these problems $\capdual\leq 1$.

Although these lower and upper bounds (\eqref{eqn:lb} and \eqref{eqn:ub}) may be useful to gain intuition about the properties of problem \eqref{mod:first-stage}, they may be loose and thus have limited utility within a solution method. In the next section we investigate an approach for improving these bounds.

\subsection{Improving the Lower and Upper Bounds via Support Refinement}
The elementary lower and upper bounds (\eqref{eqn:lb} and \eqref{eqn:ub}) can be extended to a partition of the support $\randvecsupport$. Cormican et al. \cite{Cormican.98} used this idea for a stochastic network interdiction problem that did not explicitly consider decision-dependent uncertainty. Rather than using the concept of a partition of $\randvecsupport$ to describe our method, we use the equivalent concept of a \textit{partition tree} to make the exposition easier to understand, specifically our presentation of the mean value lower bound problem \eqref{mod:epi}. Let $\parttree$ denote a partition tree with nodes set $\parttreenodes(\parttree)$ and arcs set $\parttreearcs(\parttree)$ such that each node represents a partial realization of all components $\compon\in\components$. Thus, a partition tree $\parttree$ corresponds to a partition of the support $\randvecsupport$, and a leaf $\leaf$ in the tree represents a cell in the corresponding partition. 

At each node the random capacity levels are \textit{fixed} for a subset of components $\fixedcomponents(\treenode)\subseteq\{1,\dots,\numcompon\}$. Thus, each node in $\parttreenodes(\parttree)$ is associated with the probability of the realization of the capacities of the components in $\fixedcomponents(\treenode)$ given the resource allocation solution. Specifically, for a given $\parttree$ the probability of node $\treenode\in \parttreenodes(\parttree)$ given allocation vector $\firststageallocvec$ is:

\begin{equation} \label{eqn:nodeprob}
    \mathbb{P}[\treenode\,|\,\firststageallocvec] = \prod_{\compon\in \fixedcomponents(\treenode)} \alloclevelssum \stateprobfn{i}{\componoutcome_{\compon\treenode};\alloclevel} \firststageallocil.
\end{equation}

For a given tree $\parttree$, each leaf node $\leaf$ in the set of leaf nodes $\parttreeleafnodes(\parttree)$ also has a probability (computed via \eqref{eqn:nodeprob}). Moreover, at each leaf the expectation of the random capacity levels is taken over the \textit{unfixed} components, i.e., the set $\components\setminus\fixedcomponents(\leaf)$. For a fixed $\firststageallocvec$, let 
$\bar{\staterealizvar}_{\leaf}(\firststageallocvec) = \left(\ev[\staterealizi\,|\,\firststageallocvec; \leaf]\right)_{\compon=1}^\numcompon$ be the vector of expected capacity levels for the unfixed components, where 

\[\ev[\staterealizi\,|\,\firststageallocvec; \leaf] = \begin{cases}
\componoutcome_{\compon\leaf}, & \compon\in \fixedcomponents(\leaf)\\
\outcomessum \alloclevelssum \stateprobfndiscrik \firststageallocil, & \compon\in \components\setminus\fixedcomponents(\leaf)
\end{cases}\]

\noindent and where $\componoutcome_{\compon\leaf}$ is the (fixed) state of component $\compon$ at leaf node $\leaf$.

A foundational result for our solution method is that any partition tree provides a lower bound on the objective value for any solution $\firststageallocvec$.

\begin{theorem} \label{thm:lb}
For a fixed $\firststageallocvec$ and partition tree $\parttree$:

\begin{equation}
    \ubar{w}(\firststageallocvec;\parttree) = \sum_{\leaf\in \parttreeleafnodes(\parttree)} \mathbb{P}[\leaf\,|\,\firststageallocvec]\, \recfn(\bar{\staterealizvar}_{\leaf}(\firststageallocvec)) \leq \ev[\recfn(\staterealizvar)\, |\,\firststageallocvec].
\end{equation}
\end{theorem}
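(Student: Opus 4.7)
The plan is to decompose the expected recourse cost over the leaves of the partition tree via the law of total expectation, and then apply Jensen's inequality leaf-by-leaf using convexity of $\recfn$.

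First I would note that the leaves in $\parttreeleafnodes(\parttree)$ induce a partition of the support $\randvecsupport$: each leaf $\leaf$ corresponds to the event $E_\leaf$ that $\staterealizi = \componoutcome_{\compon\leaf}$ for all $\compon\in \fixedcomponents(\leaf)$. By independence of the component capacities, the probability of this event under allocation $\firststageallocvec$ is exactly the product in \eqref{eqn:nodeprob}, so $\mathbb{P}[E_\leaf\,|\,\firststageallocvec] = \mathbb{P}[\leaf\,|\,\firststageallocvec]$. Applying the law of total expectation over this partition gives
\begin{equation*}
\ev[\recfn(\staterealizvar)\,|\,\firststageallocvec] = \sum_{\leaf\in \parttreeleafnodes(\parttree)} \mathbb{P}[\leaf\,|\,\firststageallocvec]\, \ev[\recfn(\staterealizvar)\,|\,\firststageallocvec,\leaf].
\end{equation*}

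Next I would bound each conditional expectation from below via Jensen's inequality. Since $\recfn$ is the value function of the linear program \eqref{mod:recourse} parameterized by the right-hand-side data $\staterealizvar$, it is convex over $\randvecsupport$ (this was already noted in the text preceding \eqref{eqn:lb}). Hence for each leaf $\leaf$,
\begin{equation*}
\recfn\!\bigl(\ev[\staterealizvar\,|\,\firststageallocvec,\leaf]\bigr) \;\leq\; \ev[\recfn(\staterealizvar)\,|\,\firststageallocvec,\leaf].
\end{equation*}
I then need to identify the conditional mean with $\bar{\staterealizvar}_\leaf(\firststageallocvec)$. This is where mutual independence of the $\staterealizi$'s is used again: conditioning on the realizations of components in $\fixedcomponents(\leaf)$ leaves the marginal distributions of the remaining components unchanged, so for $\compon\in\fixedcomponents(\leaf)$ the conditional mean is $\componoutcome_{\compon\leaf}$, and for $\compon\in \components\setminus\fixedcomponents(\leaf)$ it is $\outcomessum \alloclevelssum \stateprobfndiscrik \firststageallocil$, which matches the definition of $\bar{\staterealizvar}_\leaf(\firststageallocvec)$.

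Combining these two steps, I would multiply the leaf-wise Jensen bound by $\mathbb{P}[\leaf\,|\,\firststageallocvec]$, sum over $\leaf\in \parttreeleafnodes(\parttree)$, and use the total expectation identity on the right-hand side to obtain exactly
\begin{equation*}
\ubar{w}(\firststageallocvec;\parttree) = \sum_{\leaf\in \parttreeleafnodes(\parttree)} \mathbb{P}[\leaf\,|\,\firststageallocvec]\, \recfn(\bar{\staterealizvar}_\leaf(\firststageallocvec)) \;\leq\; \ev[\recfn(\staterealizvar)\,|\,\firststageallocvec].
\end{equation*}
The main obstacle is really only a bookkeeping one: carefully verifying that the formal definition of $\bar{\staterealizvar}_\leaf(\firststageallocvec)$ in the text coincides with the conditional mean $\ev[\staterealizvar\,|\,\firststageallocvec,\leaf]$, which requires explicit appeal to the independence assumption \eqref{probExpr}. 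The convexity of $\recfn$ and the law of total expectation are standard and cause no trouble.
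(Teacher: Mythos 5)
Your proposal is correct and follows essentially the same route as the paper's proof: apply the conditional form of Jensen's inequality at each leaf using convexity of $\recfn$, then weight by $\mathbb{P}[\leaf\,|\,\firststageallocvec]$ and sum, with the law of total expectation closing the argument. You are somewhat more explicit than the paper in verifying (via independence) that $\bar{\staterealizvar}_\leaf(\firststageallocvec)$ really is the conditional mean $\ev[\staterealizvar\,|\,\firststageallocvec,\leaf]$, which is a useful bookkeeping step the paper leaves implicit, but it is not a different approach.
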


\begin{proof}

Let $\ev[\recfn(\staterealizvar)\, |\,\firststageallocvec,\leaf]$ denote the expectation of $\recfn(\staterealizvar)$, given allocation $\firststageallocvec$ and the realization of the capacity levels of the fixed nodes at leaf node $\leaf$. Using the conditional form of Jensen's inequality (see \cite{huang1977bounds}), for a given leaf node $\leaf$ we have that $\recfn(\bar{\staterealizvar}_{\leaf}(\firststageallocvec))\leq \ev[\recfn(\staterealizvar)\, |\,\firststageallocvec,\leaf]$ for all $\leaf\in \parttreeleafnodes(\parttree)$. Multiplying both sides by $\mathbb{P}[\leaf\,|\,\firststageallocvec]$ and summing each side over all leaf nodes yields the result.
\end{proof}

At this point we investigate two extreme cases of partition trees, the tree consisting of only the root node and the full partition tree. Letting $\parttreeroot$ denote the partition tree consisting of only the root node, we observe that this solution provides the elementary lower bound \eqref{eqn:lb}. 

\begin{remark}
In this case the root node is also the only leaf node $\leaf^0$ in the set $\parttreeleafnodes(\parttreeroot)$. Thus, $\mathbb{P}[\leaf^0\,|\,\firststageallocvec] = 1$ and, because all components are unfixed at leaf $\leaf^0$, $\bar{\staterealizvar}_{\leaf}(\firststageallocvec)=\bar{\staterealizvar}(\firststageallocvec)$. Hence:

\begin{equation}
    \ubar{w}(\firststageallocvec;\parttreeroot) = \sum_{\leaf\in \parttreeleafnodes(\parttreeroot)} \mathbb{P}[\leaf\,|\,\firststageallocvec] \recfn(\bar{\staterealizvar}_{\leaf}(\firststageallocvec)) = \recfn(\bar{\staterealizvar}(\firststageallocvec))
\end{equation}
\end{remark}

Also, the \textit{full partition tree}, which corresponds to complete partition of $\randvecsupport$, recovers the true objective value of a solution $\firststageallocvec$.

\begin{corollary}\label{cor:full}
Let $\parttreefull$ denote the full partition tree. Then:

\begin{equation}
    \ubar{w}(\firststageallocvec;\parttreefull) = \sum_{\leaf\in \parttreeleafnodes(\parttreefull)} \mathbb{P}[\leaf\,|\,\firststageallocvec] \recfn(\bar{\staterealizvar}_{\leaf}(\firststageallocvec)) = \ev[\recfn(\staterealizvar)\, |\,\firststageallocvec ] = \sum_{\leaf\in \parttreeleafnodes(\parttreefull)} \mathbb{P}[\leaf\,|\,\firststageallocvec] \recfnpenalty(\bar{\staterealizvar}_{\leaf}(\firststageallocvec)) = \bar{w}(\firststageallocvec;\parttreefull)
\end{equation}
\end{corollary}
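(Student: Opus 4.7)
The plan is to show that for the full partition tree, every leaf corresponds to a single realization in the support $\randvecsupport$, and in that degenerate case the conditional expectation defining $\bar{\staterealizvar}_\leaf(\firststageallocvec)$ collapses to the realization itself. So the weighted sum of recourse values over leaves is literally the expectation over realizations.

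First I would formally describe $\parttreefull$: at every leaf $\leaf\in\parttreeleafnodes(\parttreefull)$ the fixed set satisfies $\fixedcomponents(\leaf)=\components$, so the branching has proceeded until every component's capacity state is pinned down. This induces a bijection $\leaf\mapsto \staterealizvar_\leaf = (\componoutcome_{\compon\leaf})_{\compon\in\components}$ between $\parttreeleafnodes(\parttreefull)$ and $\randvecsupport$. Next I would apply the two definitions from the excerpt at such a leaf. Because the ``unfixed'' case in the piecewise definition of $\ev[\staterealizi\,|\,\firststageallocvec;\leaf]$ is vacuous, we get $\bar{\staterealizvar}_\leaf(\firststageallocvec)=\staterealizvar_\leaf$. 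And because $\fixedcomponents(\leaf)=\components$, the node-probability formula \eqref{eqn:nodeprob} becomes
\begin{equation*}
\mathbb{P}[\leaf\,|\,\firststageallocvec]=\prod_{\compon\in\components}\alloclevelssum \stateprobfn{\compon}{\componoutcome_{\compon\leaf};\alloclevel}\firststageallocil = \probfn[\staterealizvar_\leaf\,|\,\firststageallocvec],
\end{equation*}
by the independence form \eqref{probExpr-discrete}.

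Combining these two identities, the outer sum rewrites as
\begin{equation*}
\sum_{\leaf\in\parttreeleafnodes(\parttreefull)} \mathbb{P}[\leaf\,|\,\firststageallocvec]\,\recfn(\bar{\staterealizvar}_\leaf(\firststageallocvec)) = \sum_{\staterealizvar\in\randvecsupport}\probfn[\staterealizvar\,|\,\firststageallocvec]\,\recfn(\staterealizvar)=\ev[\recfn(\staterealizvar)\,|\,\firststageallocvec],
\end{equation*}
which is the middle equality in the corollary. For the final equality I would invoke the property of the penalized reformulation \eqref{mod:penalty-mvp} noted in the excerpt: $\recfnpenalty(\staterealizvar)=\recfn(\staterealizvar)$ for every $\staterealizvar\in\randvecsupport$. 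Since each $\bar{\staterealizvar}_\leaf(\firststageallocvec)=\staterealizvar_\leaf\in\randvecsupport$, we may substitute $\recfnpenalty$ for $\recfn$ pointwise in the sum without changing its value, giving $\bar{w}(\firststageallocvec;\parttreefull)$ on the right.

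There is essentially no hard step; the result is a ``leaf-level collapse'' of the Jensen inequality used in Theorem~\ref{thm:lb}, with equality holding because the conditioning $\sigma$-algebra is already fine enough to resolve the randomness. The only place requiring care is bookkeeping the bijection between $\parttreeleafnodes(\parttreefull)$ and $\randvecsupport$ and verifying that the product formula \eqref{eqn:nodeprob} reduces cleanly to \eqref{probExpr-discrete} when $\fixedcomponents(\leaf)=\components$. Once that is in place, both the ``$\recfn$-sum equals the expectation'' and the ``$\recfnpenalty$-sum equals the expectation'' equalities follow by the same argument, so no separate Jensen-style estimate is needed for the upper-bound side.
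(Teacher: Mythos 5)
Your proposal is correct and follows essentially the same route as the paper's proof: identify the bijection between leaves of $\parttreefull$ and realizations in $\randvecsupport$, note that all components are fixed at each leaf so $\bar{\staterealizvar}_{\leaf}(\firststageallocvec)$ collapses to the realization itself, and rewrite the leaf sum as the expectation. You are in fact slightly more complete than the paper, which only argues the middle equality explicitly and leaves the $\recfnpenalty$ side implicit, whereas you justify it directly from $\recfnpenalty=\recfn$ on $\randvecsupport$.
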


\begin{proof}
To establish the second equality, not that there is a one-to-one mapping between the leaf nodes in $\parttreeleafnodes(\parttreefull)$ and the set of realizations $\randvecsupport$. Also, because at each leaf node all of the components are fixed, $\bar{\staterealizvar}_{\leaf}(\firststageallocvec)=\staterealizvar$ for all $\leaf\in\parttreeleafnodes(\parttreefull)$. Hence:

\begin{equation}
    \sum_{\leaf\in \parttreeleafnodes(\parttreefull)} \mathbb{P}[\leaf\,|\,\firststageallocvec]\, \recfn(\bar{\staterealizvar}_{\leaf}(\firststageallocvec)) = \sum_{\staterealizvar\in \randvecsupport} \probability[\staterealizvar \,|\, \firststagevec]\,\recfn(\staterealizvar) = 
    \ev[\recfn(\staterealizvar)\, |\,\firststageallocvec ]
\end{equation}

\end{proof}

Figure \ref{fig:1} shows an example of a full refinement tree for a network problem consisting of three arcs (components): (1,2), (2,3), and (3,4). Each arc has a binary random capacity level, i.e., $\staterealizi \in \{0, 1\}$ for $i=1,2,3$. After the root node, the first level of nodes corresponds to fixing arc $(1,2)$ at capacity level 0 and 1, respectively. The second level corresponds to fixing arc $(2,3)$, and the third level $(3,4)$. Thus, there are eight leaf nodes at the bottom, each corresponding to a realization of $\staterealizvar$. Note that in this full refinement tree there are no unfixed components at the leaf nodes.

\begin{figure}
\centering
  \includegraphics[width=0.75\linewidth]{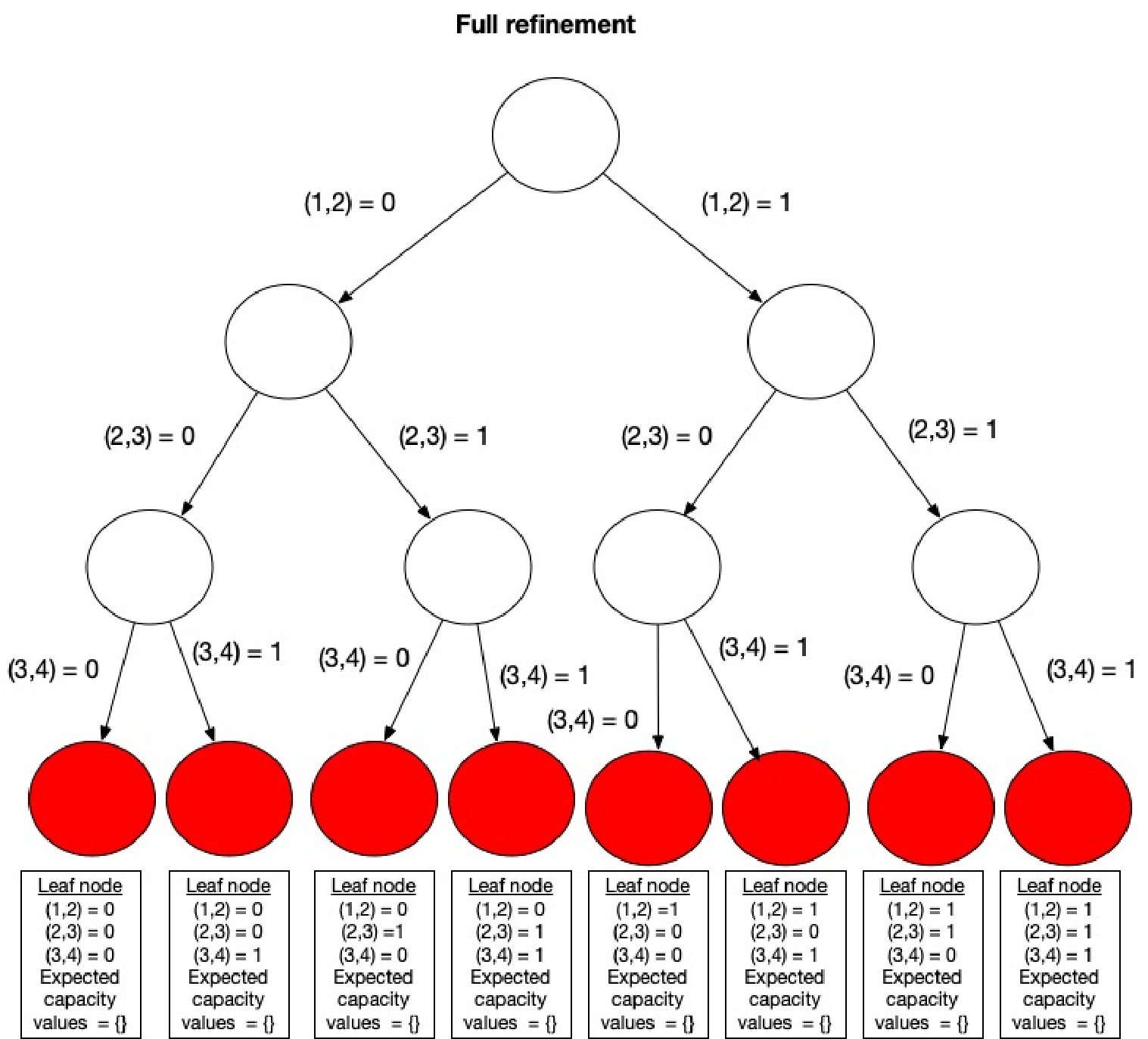}
\caption{ A full refinement three for a network with three arcs. The nodes colored in red are leaf nodes}
\label{fig:1}
\end{figure}

Because the root partition tree $\parttreeroot$ is likely to provide a weak bound and computing the bound for the full partition tree $\parttreefull$ is computationally intractable for large problems, it is desirable to find an intermediate tree that provides a strong bound and yet is much smaller than the full partition tree. Thus, at this point we introduce the concept of refining a tree to obtain a super tree with tighter bounds.

\begin{definition}\label{def:refine}
    A partition tree $\parttree'$ is said to be a \textit{super tree} of a partition tree $\parttree$ if for a particular leaf $\leaf'\in\parttreeleafnodes(\parttree)$ $\parttreenodes(\parttree')= \parttreenodes(\parttree)\cup\descendentnodes(\leaf)$, where $\descendentnodes(\leaf)$ is a set of descendants of leaf $\leaf'$. Furthermore, and $\parttreeleafnodes(\parttree')= (\parttreeleafnodes(\parttree)\setminus\{\leaf'\})\cup\descendentnodes(\leaf)$. Finally,
    $\parttreearcs(\parttree')=\parttreearcs(\parttree)\cup\{(\leaf,\treenode)\,:\,\treenode\in \descendentnodes(\leaf')\}$.
\end{definition}

Thus, the partition of $\randvecsupport$ corresponding to the super tree $\parttree'$ is a refinement of the partition corresponding to $\parttree$. A partially refined tree is shown in Figure \ref{fig:2}. This tree is a super tree of the tree consisting formed by starting with the root node and then adding descendent nodes corresponding to setting the capacity level of $(1,2)$ to 0 and 1, respectively. 

\begin{figure}
\centering
  \includegraphics[width=0.75\linewidth]{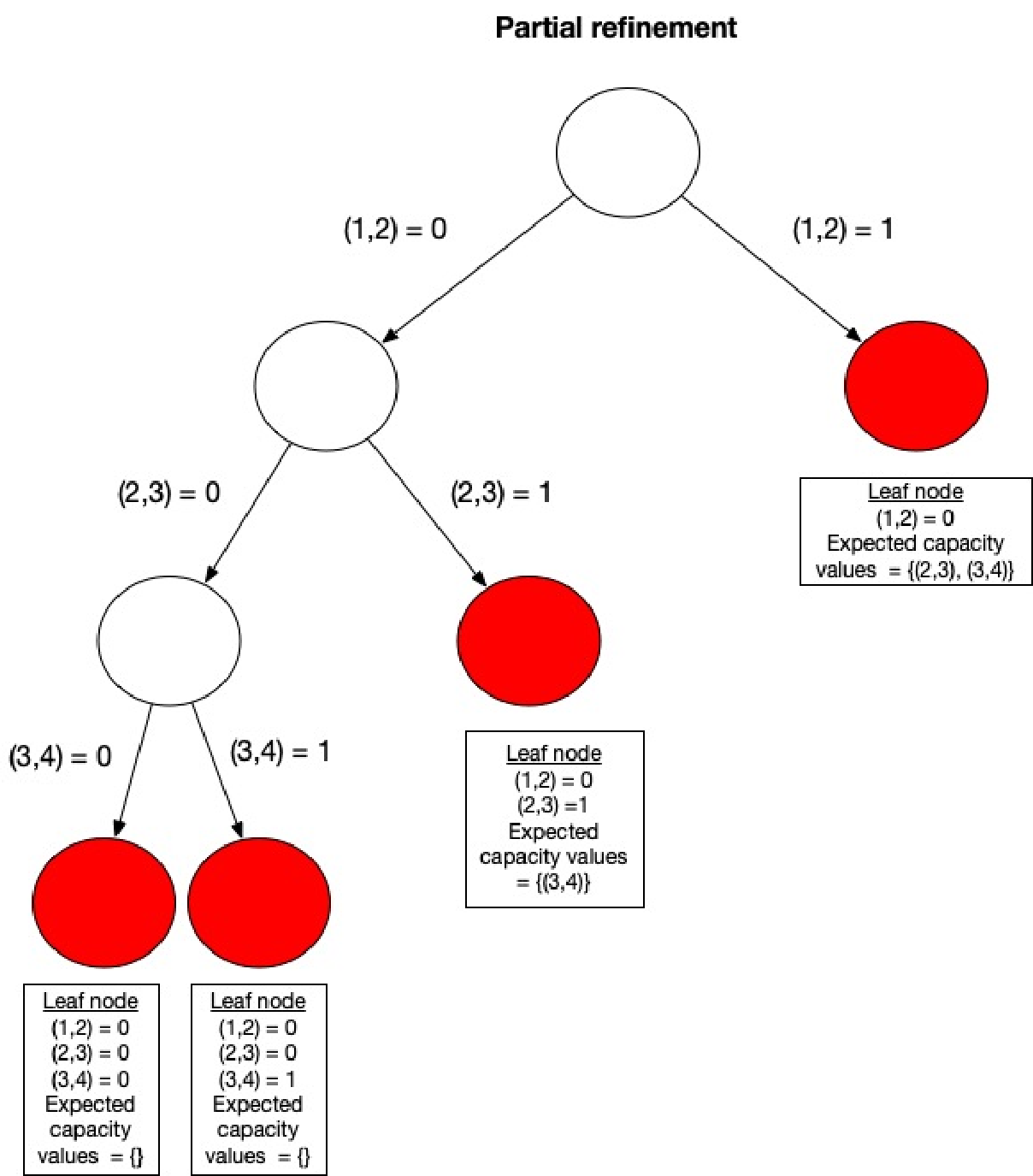}
\caption{A partially refined version of the tree in Figure \ref{fig:1}}
\label{fig:2}
\end{figure}

The following theorem shows that refining a tree yields a lower bound that is least as tight as the lower bound for the original tree.

\begin{theorem} \label{thm:lb-refine}
For a fixed $\firststageallocvec$ and for $\parttree$ and $\parttree'$ such that $\parttree'$ is a super tree of $\parttree$:

\begin{equation}
    \ubar{w}(\firststageallocvec;\parttree) \leq \ubar{w}(\firststageallocvec;\parttree').
\end{equation}
\end{theorem}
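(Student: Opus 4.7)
The plan is to reduce this to the conditional form of Jensen's inequality applied locally at the leaf that gets refined. Since the super tree $\parttree'$ is obtained from $\parttree$ by replacing a single leaf $\leaf'$ with its descendants $\descendentnodes(\leaf')$, while every other leaf of $\parttree$ survives unchanged in $\parttreeleafnodes(\parttree')$, the contributions of those unchanged leaves to the two sums $\ubar{w}(\firststageallocvec;\parttree)$ and $\ubar{w}(\firststageallocvec;\parttree')$ are identical. Thus it suffices to establish the local inequality
\[
\mathbb{P}[\leaf'\,|\,\firststageallocvec]\,\recfn(\bar{\staterealizvar}_{\leaf'}(\firststageallocvec)) \;\leq\; \sum_{\treenode\in\descendentnodes(\leaf')} \mathbb{P}[\treenode\,|\,\firststageallocvec]\,\recfn(\bar{\staterealizvar}_{\treenode}(\firststageallocvec)).
\]

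First I would verify the two probabilistic identities that feed into Jensen. Using the product form \eqref{eqn:nodeprob} of the node probability and the fact that at each descendant $\treenode\in\descendentnodes(\leaf')$ the fixed set $\fixedcomponents(\treenode)$ extends $\fixedcomponents(\leaf')$ with additional components whose states partition the possible outcomes for those components, the law of total probability gives $\mathbb{P}[\leaf'\,|\,\firststageallocvec] = \sum_{\treenode\in\descendentnodes(\leaf')} \mathbb{P}[\treenode\,|\,\firststageallocvec]$. Next, using the independence of component capacities (equation \eqref{probExpr-discrete}) together with the definition of $\bar{\staterealizvar}_{\leaf}(\firststageallocvec)$, I would show, component by component, that
\[
\bar{\staterealizvar}_{\leaf'}(\firststageallocvec) \;=\; \sum_{\treenode\in\descendentnodes(\leaf')} \frac{\mathbb{P}[\treenode\,|\,\firststageallocvec]}{\mathbb{P}[\leaf'\,|\,\firststageallocvec]}\,\bar{\staterealizvar}_{\treenode}(\firststageallocvec).
\]
For components in $\fixedcomponents(\leaf')$ both sides return the fixed outcome $\componoutcome_{\compon\leaf'}$; for the newly fixed components in each descendant the identity is the tower property; for components still unfixed at every descendant the conditional expectation coincides with the unconditional expectation thanks to independence, so both sides reduce to the same formula $\outcomessum \alloclevelssum \stateprobfndiscrik \firststageallocil$.

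With these two identities, the local inequality is an application of Jensen's inequality for the convex function $\recfn$ (convexity on $\randvecsupport$ is recalled in the paragraph preceding \eqref{eqn:lb}), using the conditional weights $\mathbb{P}[\treenode\,|\,\firststageallocvec]/\mathbb{P}[\leaf'\,|\,\firststageallocvec]$: they are nonnegative and sum to one by the first identity, and the argument of $\recfn$ on the left is their convex combination of the arguments on the right by the second identity. Multiplying through by $\mathbb{P}[\leaf'\,|\,\firststageallocvec]$ then summing over the unchanged leaves yields the theorem.

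The main obstacle is the second identity above: one must argue that the conditional expected capacity at $\leaf'$ decomposes as the desired convex combination of the conditional expected capacities at its descendants. This is the step that genuinely uses the mutual independence assumption on the component capacities, since without it the refinement at the descendants could alter the conditional distribution of components that remain unfixed at $\leaf'$ but are unfixed at every $\treenode\in\descendentnodes(\leaf')$. Once independence is invoked componentwise, the rest of the proof is a clean bookkeeping exercise plus one invocation of Jensen.
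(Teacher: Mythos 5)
Your proposal is correct and follows essentially the same route as the paper's own proof, which simply invokes the convexity of $\recfn$ to assert that replacing the leaf $\leaf'$ by its descendants tightens the bound. You have in fact supplied the two identities (total probability over $\descendentnodes(\leaf')$ and the convex-combination decomposition of $\bar{\staterealizvar}_{\leaf'}(\firststageallocvec)$, the latter relying on component independence) that the paper's two-line argument leaves implicit, so your write-up is a more complete version of the same Jensen-at-the-refined-leaf argument.
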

\begin{proof}

For a given partition tree $\parttree$, $\ubar{w}(\firststageallocvec;\parttree) = \sum_{\leaf\in \parttreeleafnodes(\parttree)} \mathbb{P}[\leaf\,|\,\firststageallocvec]\, \recfn(\bar{\staterealizvar}_{\leaf}(\firststageallocvec))$. A super tree $\parttree^S$ is formed by choosing a leaf $\leaf\in\parttreeleafnodes(\parttree)$ and replacing it with a set of descendants $\descendentnodes(\leaf)$ (Definition \ref{def:refine}). By the convexity of $\recfn$, this replacement results in a tighter lower bound. This refinement process can be continued until the final super tree $\parttree'$ is attained.

\end{proof}

A straightforward corollary to Theorem \ref{thm:lb} is that any partition tree also provides an upper bound on the objective value for any solution $\firststageallocvec$ by using the penalized reformulation.

\begin{corollary}\label{cor:ub}
For a fixed $\firststageallocvec$ and partition tree $\parttree$:

\begin{equation}
\ev[\recfn(\staterealizvar)\, |\,\firststageallocvec ] \leq \sum_{\leaf\in \parttreeleafnodes(\parttree)} \mathbb{P}[\leaf\,|\,\firststageallocvec]\, \recfnpenalty(\bar{\staterealizvar}_{\leaf}(\firststageallocvec)) = \bar{w}(\firststageallocvec;\parttree),
\end{equation}
\end{corollary}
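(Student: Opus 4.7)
The plan is to mirror the argument used in the proof of Theorem \ref{thm:lb} almost verbatim, but replacing the convex function $\recfn$ with the concave penalized reformulation $\recfnpenalty$; this simply flips the direction of Jensen's inequality and yields an upper bound rather than a lower bound. First I would invoke the two properties of $\recfnpenalty$ already established in the discussion surrounding \eqref{mod:penalty-mvp}: namely, $\recfnpenalty(\staterealizvar) = \recfn(\staterealizvar)$ for every $\staterealizvar \in \randvecsupport$ (so that $\ev[\recfn(\staterealizvar)\,|\,\firststageallocvec] = \ev[\recfnpenalty(\staterealizvar)\,|\,\firststageallocvec]$), and that $\recfnpenalty$ is concave on $\randvecsupport$, as guaranteed by the choice of $\bigmvali$ as an upper bound on the dual multiplier $\capduali$.

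Second, for each leaf $\leaf \in \parttreeleafnodes(\parttree)$, I would apply the conditional form of Jensen's inequality (as cited in the proof of Theorem \ref{thm:lb}) to the concave function $\recfnpenalty$, using the conditional distribution of $\staterealizvar$ given $\firststageallocvec$ together with the fixings at $\leaf$. Since the conditional mean of $\staterealizvar$ given $\firststageallocvec$ and $\leaf$ is precisely $\bar{\staterealizvar}_{\leaf}(\firststageallocvec)$ by construction, this yields
\[
\ev[\recfnpenalty(\staterealizvar)\,|\,\firststageallocvec, \leaf] \;\leq\; \recfnpenalty(\bar{\staterealizvar}_{\leaf}(\firststageallocvec))
\qquad \forall\,\leaf \in \parttreeleafnodes(\parttree).
\]

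Third, I would multiply both sides by $\mathbb{P}[\leaf\,|\,\firststageallocvec]$ and sum over all leaves. Because the leaves of $\parttree$ induce a partition of $\randvecsupport$, the law of total expectation gives
\[
\ev[\recfnpenalty(\staterealizvar)\,|\,\firststageallocvec] \;=\; \sum_{\leaf\in \parttreeleafnodes(\parttree)} \mathbb{P}[\leaf\,|\,\firststageallocvec]\, \ev[\recfnpenalty(\staterealizvar)\,|\,\firststageallocvec, \leaf],
\]
and combining this identity with the per-leaf bound and the equality $\ev[\recfn(\staterealizvar)\,|\,\firststageallocvec] = \ev[\recfnpenalty(\staterealizvar)\,|\,\firststageallocvec]$ from the first step delivers the stated inequality and the definition of $\bar{w}(\firststageallocvec;\parttree)$.

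There is no real obstacle here: the argument is structurally identical to that of Theorem \ref{thm:lb}, and the only subtle point is bookkeeping — namely, confirming that the equality $\recfnpenalty = \recfn$ on $\randvecsupport$ combined with concavity of $\recfnpenalty$ on all of $\randvecsupport$ (not merely on the discrete support) legitimately allows Jensen's inequality to be applied at the possibly non-extreme mean vector $\bar{\staterealizvar}_{\leaf}(\firststageallocvec)$. Both ingredients are already secured by the construction of the penalized reformulation under the assumption that $\bigmvali$ dominates $\capduali$, so no additional work is needed beyond citing that construction.
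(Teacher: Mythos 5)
Your proposal is correct and matches the paper's intent exactly: the paper omits a formal proof, asserting the corollary follows from Theorem \ref{thm:lb} ``by using the penalized reformulation,'' and your argument—replacing $\recfn$ by the concave $\recfnpenalty$, noting $\recfnpenalty=\recfn$ on $\randvecsupport$, applying the conditional Jensen inequality leafwise with the sign reversed, and summing with weights $\mathbb{P}[\leaf\,|\,\firststageallocvec]$—is precisely the mirrored argument being invoked. No gaps; the bookkeeping points you flag (concavity on the convex hull, equality on the discrete support) are indeed the only subtleties and are covered by the construction of \eqref{mod:penalty-mvp}.
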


A straightforward corollary to Theorem \ref{thm:lb-refine} is that refining a tree yields an upper bound that is at least as tight.

\begin{corollary} \label{cor:ub-refine}
For a fixed $\firststageallocvec$ and for $\parttree$ and $\parttree'$ such that $\parttree'$ is a super tree of $\parttree$:

\begin{equation}
    \bar{w}(\firststageallocvec;\parttree) \geq \bar{w}(\firststageallocvec;\parttree').
\end{equation}
\end{corollary}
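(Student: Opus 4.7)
The approach is to mirror the proof of Theorem \ref{thm:lb-refine}, but using the \emph{concavity} of $\recfnpenalty$ (established via the penalized reformulation \eqref{mod:penalty-mvp}) in place of the convexity of $\recfn$. This symmetry is what makes the upper bound behave dually to the lower bound under refinement.

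First I would reduce to the case of a single-step refinement: suppose $\parttree^S$ is obtained from $\parttree$ by replacing one leaf $\leaf'\in\parttreeleafnodes(\parttree)$ by its descendants $\descendentnodes(\leaf')$, as in Definition \ref{def:refine}. All other leaves appear identically in $\parttreeleafnodes(\parttree)$ and $\parttreeleafnodes(\parttree^S)$, so their contributions cancel in the difference $\bar{w}(\firststageallocvec;\parttree)-\bar{w}(\firststageallocvec;\parttree^S)$, and it suffices to prove
\[
\mathbb{P}[\leaf'\,|\,\firststageallocvec]\,\recfnpenalty(\bar{\staterealizvar}_{\leaf'}(\firststageallocvec)) \;\geq\; \sum_{\treenode\in \descendentnodes(\leaf')} \mathbb{P}[\treenode\,|\,\firststageallocvec]\,\recfnpenalty(\bar{\staterealizvar}_{\treenode}(\firststageallocvec)).
\]

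Second, since the descendants of $\leaf'$ correspond to a partition of the cell of $\randvecsupport$ associated with $\leaf'$, the probabilities satisfy $\mathbb{P}[\leaf'\,|\,\firststageallocvec] = \sum_{\treenode\in\descendentnodes(\leaf')} \mathbb{P}[\treenode\,|\,\firststageallocvec]$, and the law of total expectation (applied coordinate by coordinate, using the component independence assumption \eqref{probExpr-discrete}) yields $\bar{\staterealizvar}_{\leaf'}(\firststageallocvec) = \sum_{\treenode\in\descendentnodes(\leaf')} \bigl(\mathbb{P}[\treenode\,|\,\firststageallocvec]/\mathbb{P}[\leaf'\,|\,\firststageallocvec]\bigr)\,\bar{\staterealizvar}_{\treenode}(\firststageallocvec)$. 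These normalized weights form a convex combination, so applying Jensen's inequality to the concave $\recfnpenalty$ (in its conditional form, as in the proof of Theorem \ref{thm:lb}) and multiplying through by $\mathbb{P}[\leaf'\,|\,\firststageallocvec]$ delivers the displayed inequality.

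Finally, since any super tree $\parttree'$ of $\parttree$ is obtainable by a finite sequence of single-leaf refinements, iterating the one-step inequality chains the desired bound. The only subtle step is verifying the coordinate-wise total-expectation identity for $\bar{\staterealizvar}_{\leaf'}$ when a refinement converts some unfixed components into fixed ones; this follows directly from the independence of component states and from the piecewise definition of $\bar{\staterealizvar}_{\leaf}$ given just before Theorem \ref{thm:lb}. I do not expect any substantive difficulty beyond correctly bookkeeping which components transition from unfixed to fixed during a refinement.
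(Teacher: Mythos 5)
Your proof is correct and follows exactly the route the paper intends: it mirrors the proof of Theorem \ref{thm:lb-refine}, replacing convexity of $\recfn$ with concavity of $\recfnpenalty$ and applying Jensen's inequality to the convex combination of leaf means induced by the one-step refinement, then iterating. The paper states this corollary without proof as ``straightforward,'' and your argument (which also correctly verifies the probability-splitting and mean-vector identities the paper leaves implicit) is precisely the intended one.
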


Let $\ubar{w}(\parttree)$ be the lower bound that is the optimal objective value of the following \textit{mean value lower bound problem for partition} $\parttree$:

\begin{mini}|s|
    {\firststagevec\in \firststagefeasregion}{\costallocvec^T\firststagevec + \sum_{\leaf\in \parttreeleafnodes(\parttree)} \mathbb{P}[\leaf\,|\,\firststageallocvec]\, \recfn(\bar{\staterealizvar}_{\leaf}(\firststageallocvec))}{\label{mod:mvp-part}}{}
\end{mini}

Letting $z^*$ denote the optimal objective value of \eqref{mod:first-stage}, the following corollary follows from Theorems \ref{thm:lb} and \ref{thm:lb-refine}.

\begin{corollary}\label{cor:lb}
For partition trees $\parttree$ and $\parttree'$ such that $\parttree'$ is a refinement of $\parttree$:
\begin{equation}
    \ubar{w}(\parttree) \leq \ubar{w}(\parttree')\leq \ubar{w}(\parttreefull) = z^*.
\end{equation}
\end{corollary}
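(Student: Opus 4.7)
The plan is to derive all three relations from their pointwise analogues by minimizing each side over the common first-stage feasible region $\firststagefeasregion$. Note that the objective of \eqref{mod:mvp-part} has the form $\costallocvec^T\firststagevec + \ubar{w}(\firststagevec;\parttree)$, where only the second term depends on the partition tree, so everything reduces to comparing $\ubar{w}(\firststagevec;\cdot)$ as a function of the tree for each fixed feasible $\firststagevec$.

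First, for $\ubar{w}(\parttree) \leq \ubar{w}(\parttree')$, I would invoke Theorem \ref{thm:lb-refine} at every $\firststagevec\in\firststagefeasregion$ to obtain $\ubar{w}(\firststagevec;\parttree)\leq \ubar{w}(\firststagevec;\parttree')$. Adding the common term $\costallocvec^T\firststagevec$ to both sides yields a pointwise inequality of the two objective functions of \eqref{mod:mvp-part} over the same feasible set, and minimizing a smaller function over the same set produces a smaller minimum, giving the claimed inequality.

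Second, for $\ubar{w}(\parttree')\leq \ubar{w}(\parttreefull)$, I would argue that $\parttreefull$ can itself be obtained from $\parttree'$ by a finite sequence of single-leaf refinements in the sense of Definition \ref{def:refine}, each of which is, by Theorem \ref{thm:lb-refine}, weakly tightening at every $\firststagevec$. Composing these pointwise inequalities gives $\ubar{w}(\firststagevec;\parttree')\leq \ubar{w}(\firststagevec;\parttreefull)$ for all feasible $\firststagevec$, and the same ``minimize both sides'' argument closes the bound.

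Finally, for the equality $\ubar{w}(\parttreefull) = z^*$, I would appeal directly to Corollary \ref{cor:full}, which states that for every $\firststagevec$, $\sum_{\leaf\in\parttreeleafnodes(\parttreefull)}\mathbb{P}[\leaf\,|\,\firststageallocvec]\,\recfn(\bar{\staterealizvar}_{\leaf}(\firststageallocvec)) = \ev[\recfn(\staterealizvar)\,|\,\firststageallocvec]$. Consequently the objective of \eqref{mod:mvp-part} at $\parttreefull$ agrees pointwise with the objective of \eqref{mod:first-stage} on $\firststagefeasregion$, and minimizing both expressions yields the equality. The argument is largely a bookkeeping exercise; the only step requiring mild care is the observation that $\parttreefull$ is reachable from any $\parttree'$ by iterated single-leaf refinements, which is why the monotonicity of Theorem \ref{thm:lb-refine} can be chained all the way up to the full tree. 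I do not anticipate a real obstacle, since the nontrivial convexity work was already done in proving Theorem \ref{thm:lb-refine} and Corollary \ref{cor:full}.
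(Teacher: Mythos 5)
Your proof is correct and follows essentially the same route the paper intends: the paper offers no explicit proof, merely asserting that the corollary follows from Theorems \ref{thm:lb} and \ref{thm:lb-refine}, and your argument—pointwise monotonicity in the tree via Theorem \ref{thm:lb-refine}, chaining single-leaf refinements up to $\parttreefull$, identifying the full-tree objective with the true objective via Corollary \ref{cor:full}, and then minimizing over the common feasible set—is exactly the bookkeeping the paper leaves implicit.
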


Corollary \ref{cor:lb} provides a basis for our successive refinement algorithm, which we describe in the next section.

\section{Successive Refinement Algorithm}\label{sra}

The successive refinement algorithm works by starting with the root partition tree $\parttreeroot$ and then successively refining the tree, obtaining tighter bounds after each refinement (see Corollary \ref{cor:lb}). The algorithm solves a version of the mean value lower bound problem \eqref{mod:mvp-part} for successively more refined trees $\parttree$. Here we present a modified version of \eqref{mod:mvp-part} that better facilitates the refining of the partition tree in a dynamic fashion. First, we expand the expression $\mathbb{P}[\leaf\,|\,\firststageallocvec]$ in \eqref{mod:mvp-part} as the product of the conditional probabilities of the nodes in the sequence of nodes along the path from the root node to $\leaf$. Let $\treenodesonpath(\leaf)$ denote the set of nodes along the path from the root node to leaf $\leaf$, $\treenode_{\pathindex,\lambda}$ the $\pathindex$th node along the path, and $\mathbb{P}[\treenode_{\pathindex+1,\lambda}\,|\,\firststageallocvec,\treenode_{i,\lambda}]$ the conditional probability of reaching node $\treenode_{\pathindex+1,\lambda}$ from node $\treenode_{\pathindex,\lambda}$. (Note that this is simply the probability of the realization the capacity level represented by the arc from $\treenode_{\pathindex,\lambda}$ to $\treenode_{\pathindex+1,\lambda}$, i.e, $\alloclevelssum \stateprobfn{i}{\componoutcome_{\compon\treenode_{\pathindex+1,\leaf}};\alloclevel} \firststageallocil$, where $\compon$ is the component that became fixed at tree node $\treenode_{\pathindex+1,\leaf}$.) Then the expanded formulation is:

\begin{mini}|s|
    {\firststagevec\in \firststagefeasregion}{\costallocvec^T\firststagevec + \sum_{\leaf\in \parttreeleafnodes(\parttree)} \left(\prod_{i=1}^{|\treenodesonpath(\lambda)|} \mathbb{P}[\treenode_{i+1,\lambda}\,|\,\firststageallocvec,\treenode_{i,\lambda}]\right) \recfn(\bar{\staterealizvar}_{\leaf}(\firststageallocvec))}{\label{mod:mvp-part-expand}}{}
\end{mini}

Next, we reformulate this problem via an epigraph formulation so that refining the tree can be done dynamically by adding constraints to the mean value lower bound problem. This epigraph formulation allows the successive refinement algorithm to be implemented in within a branch-and-cut framework by dynamically adding rows when a new incumbent solution is found. In this reformulation, we first use an auxiliary variable $\theta_0$ to represent the objective value $\ubar{w}(\parttree)$. We also use auxiliary variables $\auxvar_\treenode$ to represent the objective value at node $\treenode\in\parttreenodes(\parttree)$, conditional upon the capacity levels of the fixed components at that node. The model also includes copies of the second stage variables for each leaf node, i.e., $\secondstagevec^\leaf$. Let $\descendentnodes(\treenode;\parttree)$ denote the set of descendent nodes of node $\treenode$ in partition tree $\parttree$. Then the reformulation is as follows:

\begin{mini!}|s|
    {\firststageallocvec\in \firststagefeasregion}{\auxvar_0 \label{eq:epi-obj}}{\label{mod:epi}}{\ubar{w}(\parttree)=}
    \addConstraint{\auxvar_\treenode}{\geq \sum_{\treenode'\in \descendentnodes(\treenode;\parttree)} \mathbb{P}[\treenode'\,|\,\firststageallocvec,\treenode] \auxvar_{\treenode'}}{\quad \forall \treenode\in \parttreenodes(\parttree)\setminus \parttreeleafnodes(\parttree)\label{eqn:conditional}}
    \addConstraint{\auxvar_\leaf}{\geq \secondstagecostvec^T \secondstagevec^\leaf}{\quad \forall \leaf\in \parttreeleafnodes(\parttree) \label{eqn:leaves}}
    \addConstraint{\secondstagematrix\secondstagevec^\leaf}{= \secondstagerhsvec}{\quad \forall \leaf\in \parttreeleafnodes(\parttree) \label{eqn:leaves2}}
    \addConstraint{\secondstagevec^\leaf}{\leq u\, \bar{\staterealizvar}_{\leaf}(\firststageallocvec)}{\quad \forall \leaf\in \parttreeleafnodes(\parttree). \label{eqn:leaf-cap}}
\end{mini!}

Constraints \eqref{eqn:conditional} ensure that the conditional objective value at a node $\treenode$ is equal to the expected objective value over its descendants, conditional on the realized capacity levels of the fixed components at node $\treenode$, $\fixedcomponents(\treenode)$. Note that the conditional probability $\mathbb{P}[\treenode'\,|\,\firststageallocvec,\treenode]$ is the probability of the realization of the component corresponding to the partition tree arc in between nodes $\treenode$ and $\treenode'$, i.e., the component that became fixed at node $\treenode'$. In other words, $\mathbb{P}[\treenode'\,|\,\firststageallocvec,\treenode] = \alloclevelssum \stateprobfn{i}{\componoutcome_{\compon\treenode'};\alloclevel} \firststageallocil$, where $\compon$ is the component that became fixed at node $\treenode'$.

Constraints \eqref{eqn:leaves} compute the second-stage objective value at each leaf $\leaf$, conditional on the capacity levels of the fixed components at $\leaf$. The constraint is an inequality rather than an equality so that this constraint does not have to be removed if the partition tree to be refined by spawning descendent of $\leaf$. If such a refinement does occur, new constraints \eqref{eqn:conditional} can be added to the model without having to remove any constraints (see \S\ref{sec:update-model}). Constraints \eqref{eqn:leaves2} and \eqref{eqn:leaf-cap} enforce the second-stage constraints at leaf node $\leaf$ so that $\auxvar_\leaf=\recfn(\bar{\staterealizvar}_{\leaf}(\firststageallocvec))$ for all $\leaf\in\parttreeleafnodes(\parttree)$.

\subsection{Selecting A Leaf to Subdivide}
After solving \eqref{mod:epi} for a particular partition tree $\parttree$ and obtaining a solution $\firststageallocvec$, the next step is to refine the tree. In our approach we refine the tree at a single leaf node $\leaf'\in \parttreeleafnodes(\parttree)$. To choose the leaf node to refine, we first observe that for a given solution $\firststageallocvec$ the gap between the upper bound and lower bounds of the true objective value for that solution is (see Theorem \ref{thm:lb} and Corollary \ref{cor:ub}):

\[\bar{w}(\firststageallocvec) - \ubar{w}(\firststageallocvec) = \sum_{\leaf\in \parttreeleafnodes(\parttree)} \mathbb{P}[\leaf\,|\,\firststageallocvec] \left[\tilde{\recfn}(\bar{\staterealizvar}_{\leaf}(\firststageallocvec)) - \recfn(\bar{\staterealizvar}_{\leaf}(\firststageallocvec))\right]\]

Thus, the probability-weighted difference for leaf node $\leaf$ is

\[D^\leaf(\firststageallocvec) = \mathbb{P}[\leaf\,|\,\firststageallocvec](\tilde{\recfn}(\bar{\staterealizvar}_{\leaf}(\firststageallocvec)) - \recfn(\bar{\staterealizvar}_{\leaf}(\firststageallocvec))),\]

\noindent where $\mathbb{P}[\leaf\,|\,\firststageallocvec]$ is the probability of leaf $\leaf$ given fixed solution $\firststageallocvec$. We select the leaf that maximizes this weighted difference, i.e., $\leaf'\in \arg\max_{\leaf\in \parttreeleafnodes(\parttree)} D^\leaf(\firststageallocvec)$.

\subsection{Subdividing on a Component}
Once we have selected $\leaf'$ as the leaf node to subdivide, we then select a component to subdivide on. Specifically, we examine all unfixed components and select the one that maximally reduces the resulting difference between the upper and lower bounds for current solution $\firststageallocvec$. Let $\Lambda(\lambda,i,\componoutcome)$ be the descendent of leaf $\leaf$ that results from setting component $\compon$ to capacity level $\componoutcome$. Then we choose a component $\compon'$ to subdivide on such that:

\[\compon'\in \arg\min_{\compon=1,\dots,\numcompon}\left\{ \outcomessum \componoutcome \left(\alloclevelssum \stateprobfn{i}{\componoutcome;\alloclevel} \firststageallocil\right) D^{\Lambda(\leaf,i,\componoutcome)}(\firststageallocvec)\right\}, \]

\noindent where the summation in the inner parentheses denotes the probability that component $\compon$ has a state of $\componoutcome$. 

\subsection{Refining the Tree and Updating the Mean Value Lower Bound Problem}\label{sec:update-model}

Given an existing tree $\parttree=(\parttreenodes,\parttreearcs)$, the function $\textsc{Refine}(\parttree,\leaf',i')$ produces a new tree $\parttree'$ with nodes set that does not include $\leaf'$ but does include the descendants of $\leaf'$, i.e., $\parttreenodes' = (\parttreenodes\setminus\{\leaf'\})\cup \bigcup_{\componoutcome=0}^\numoutcomes \Lambda(\leaf',i',\componoutcome)$. After a tree $\parttree$ is refined into the new tree $\parttree'$, the conditional mean value lower bound model \eqref{mod:epi} is refined as follows. Letting $\leaf'$ be the leaf that is to be that is to be refined, refining the model amounts to adding new constraint \eqref{eqn:conditional} for $\treenode=\leaf'$ and adding constraints \eqref{eqn:leaves}-\eqref{eqn:leaf-cap} for all $\leaf\in \descendentnodes(\leaf')$, where $\descendentnodes(\leaf') = \bigcup_{\componoutcome=0}^\numoutcomes \Lambda(\leaf',i',\componoutcome)$. Note that because of the convexity of $\recfn$, the new constraint \eqref{eqn:conditional} for $\treenode=\leaf'$ will be tighter than the old constraints \eqref{eqn:leaves}-\eqref{eqn:leaf-cap} for $\leaf'$. Thus, the model is correct even if the old constraints constraints \eqref{eqn:leaves}-\eqref{eqn:leaf-cap} are not removed.

\subsection{Successive Refinement Algorithm}
Algorithm \ref{sra-callback} contains pseudocode for the successive refinement algorithm.

\begin{algorithm}
	{\sc SuccessiveRefinementAlgorithm}($\epsilon$)
	\begin{algorithmic}[1]
        \STATE $\mathcal{T}\gets\mathcal{T}_0$
        \STATE $UB\gets +\infty$. 
        \STATE $Q \gets \emptyset$
        \STATE \textbf{ADD} initial mean value lower bound problem \eqref{mod:epi} to queue of active nodes $Q$.
		\WHILE{$|Q| >0 $}
        \STATE Use \textit{node selection procedure} to dequeue node $\bbnode$ from $Q$.
        \STATE Solve relaxation of node $\bbnode$ problem. 
        \IF{relaxed problem is feasible}
        \STATE Let $\firststageallocvec$ denote the solution and $z^*$ the objective value.
        \IF{$\firststageallocvec$ is integral}
        \STATE $UB\gets\{UB,z^*\}$.
        \IF{$\firststageallocvec$ is a new incumbent.}
        \STATE let $\leaf'\in \arg\max_{\leaf\in \mathcal{L}(\mathcal{T})} \{D^\leaf(\firststageallocvec)\}$.
        \IF{$D^{\leaf'}(\firststageallocvec) > \epsilon$}
        \STATE let $\compon'\in \arg\min_{\compon=1,\dots,\numcompon}\left\{ \outcomessum \left(\alloclevelssum \stateprobfn{i}{\componoutcome;\alloclevel} \firststageallocil\right) D^{\Lambda(\leaf',\compon,\componoutcome)}(\firststageallocvec)\right\}$.
        \STATE $\mathcal{T}\gets \textsc{Refine}(\mathcal{T},\leaf',\compon')$
        \STATE Update mean value lower bound problem \eqref{mod:epi} based on new tree $\mathcal{T}$ (see \S\ref{sec:update-model}).
        \ENDIF
        \ENDIF
        \ELSE
        \STATE Execute \textit{branching procedure} to create children of $\bbnode$ and add them to $Q$. 
        \ENDIF
        \ENDIF
		\ENDWHILE
		\RETURN $UB$, $x^*$
	\end{algorithmic}
	\caption{Successive refinement algorithm (within basic branch-and-bound procedure).}
	\label{sra-callback}
\end{algorithm}

\section{Experimental Results}
\label{expr-results}


\subsection{Test Problem: Stochastic Network Interdiction Problem}\label{snip}

We tested our approach using a stochastic network interdiction problem. The first-stage decision variables denote an interdictor's allocation of attack resources to arcs, while the recourse objective is to maximize the expected maximum flow of the network across possible failure scenarios. Consider a graph $G = (N, A)$ where $N$ is the set of nodes and $A$ is the set of arcs. Let $\capac[k]$ denote the capacity of arc $k$ (in our formulation arcs are denoted by a single index $k$; we use the index $i$ to denote nodes in the set $N$). Let  $\staterealizvar$ be a random vector such that $\staterealizvar_k = 1$ if arc $k$ is interdicted and 0 otherwise.  Thus, $\componoutcome=1$ denotes that an arc has failed and $\kappa=0$ denotes that it is available. Let $b^A$ be the attacker's budget and let $\firststagefeasregion = \{\firststageallocvec \, |\, \sum_{\compon \in A} \alloclevelssum \alloclevel\firststagealloc[k\alloclevel] \leq b^A\,\numalloclevels\}$ the set of feasible attacker decisions. The objective function of the interdiction problem is given as:

\begin{mini}|s|
    {\firststageallocvec\in \firststageallocfeasregion}{ \sum_{\staterealizvar\in \randvecsupport} \mathbb{P}[\staterealizvar \,|\, \firststageallocvec]\,\recfn(\staterealizvar)}{\label{mod:innterdiction first-stage}}{}
\end{mini}

The recourse function $\recfn(\staterealizvar)$ represents the maximum flow given arc availability vector $\staterealizvar$:

\begin{maxi!}|s|
    {}{\secondstagevar_{ts} \label{eq:max-flow-obj}}{\label{mod:max-flow-recourse}}{\recfn(\staterealizvar) =}
    \addConstraint{\sum_{k\in A^+_i} \secondstagevar_k - \sum_{k\in A^-_i} \secondstagevar_k}{= \delta(i)}{\quad \forall i\in N\label{flow-conserve}}
    \addConstraint{0 \leq \secondstagevar_\compon}{\leq \capac[k](1-\staterealizvar_k)}{\quad \forall k\in A, \label{max-flow-cap}}
\end{maxi!}

\noindent where $s$ is the source node, $t$ is the sink node, $\secondstagevar_{ts}$ represents the flow on reverse arc $(t,s)$, $A_i^+$ and $A_i^-$ are the forward and reverse star sets for node $i$ and $\delta(i)$ is an expression that equals $-y_{ts}$ for $i=s$ and $y_{ts}$ for $i=t$ and 0 for all other $i\in N\setminus\{s,t\}$.

In order to solve this min-max formulation, we use the dualize-and-combine approach in which we take the dual of the inner maximum flow problem and then combine it with the first-stage minimization problem. The dual model is:

\begin{mini!}|s|
    {}{\sum_{k\in A} \capac[k](1-\staterealizvar_k) \capdual[k] \label{eq:max-flow-dual-obj}}{\label{mod:max-flow-recourse-dual}}{}
    \addConstraint{\alpha_i - \alpha_j + \beta_{ij}}{\geq 0}{\quad \forall (i,j)=k\in A\label{reduced-cost}}
    \addConstraint{0\leq \beta_k}{\leq 1}{\quad \forall k\in A\label{beta-bounds}}
    \addConstraint{\alpha_s}{=0}{\label{alphaS}}
    \addConstraint{\alpha_t}{=1.}{\label{alphaT}}
\end{mini!}

Thus, the mean value lower bound problem is:

\begin{mini!}|s|
    {\firststageallocvec\in \firststagefeasregion}{\auxvar_0 \label{eq:epi-interdict-obj}}{\label{mod:interdict-epi}}{\ubar{w}(\parttree)=}
    \addConstraint{\auxvar_\treenode}{\geq \sum_{\treenode'\in \descendentnodes(\treenode;\parttree)} \mathbb{P}[\treenode'\,|\,\firststageallocvec,\treenode] \auxvar_{\treenode'}}{\quad \forall \treenode\in \parttreenodes(\parttree)\setminus \parttreeleafnodes(\parttree)\label{eqn:interdict-conditional}}
    \addConstraint{\auxvar_\leaf}{\geq \sum_{k\in A} \capac[k](1-\bar{\staterealizvar}_{\leaf}(\firststageallocvec)) \capdual[k]^\leaf}{\quad \forall \leaf\in \parttreeleafnodes(\parttree) \label{eqn:interdict-leaves}}
    \addConstraint{\alpha_i^\leaf - \alpha_j^\leaf + \beta_{ij}^\leaf}{\geq 0}{\quad \forall (i,j)=k\in A, \leaf\in \parttreeleafnodes(\parttree) \label{eqn:interdict-leaves2}}
    \addConstraint{0\leq \beta_k^\leaf}{\leq 1}{\quad \forall (i,j)=k\in A, \leaf\in \parttreeleafnodes(\parttree) \label{eqn:interdict-leaf-cap}}
    \addConstraint{\alpha_s^\leaf}{=0}{\quad \forall\leaf\in \parttreeleafnodes(\parttree)\label{alphaS-mvlb}}
    \addConstraint{\alpha_t^\leaf}{=1}{\forall\leaf\in \parttreeleafnodes(\parttree).\label{alphaT-mvlb}}
\end{mini!}

And the penalty formulation used to compute upper bounds is:

\begin{maxi!}|s|
    {}{y_{ts} - \sum_{k\in A } (1-\staterealizvar_k)\secondstagevar_k \label{eq:max-flow-obj-penalty}}{\label{mod:max-flow-penalty-recourse}}{\recfnpenalty(\staterealizvar) =}
    \addConstraint{\sum_{k\in A^+_i} \secondstagevar_k - \sum_{k\in A^-_i} \secondstagevar_k}{= \delta(i)}{\quad \forall i\in N\label{max-flow-penalty-flow-conserve}}
    \addConstraint{0 \leq \secondstagevar_k}{\leq \capac[k]}{\quad \forall k\in A. \label{max-flow-penalty-cap}}
\end{maxi!}

In this test problem, we assumed that arcs emanating from the source or flowing into the sink cannot fail (to avoid easily disconnecting the network) and set the capacity of these arcs (as well as the reverse arc $(t,s)$) to a large value. We used the following component state probability function

\begin{equation}\label{state-prob-fn}
    f_k(\componoutcome;\alloclevel) = \begin{cases}
\frac{\alloclevel}{\alloclevel + a_k}, & \componoutcome = 1,\alloclevel > 0\\
1- \frac{\alloclevel}{\alloclevel + a_k}, & \componoutcome = 0,\alloclevel > 0\\
0, & \componoutcome = 1,\alloclevel = 0,a_k = 0\\
1, & \componoutcome = 0,\alloclevel = 0,a_k = 0,
\end{cases}
\end{equation}

\noindent where $a_k$ is a parameter varied in experiments. Thus, this state probability function implies that if $a_k = 0$ then attacking an arc with any amount of resources guarantees that the arc will fail. Conversely, if zero attack resources are allocated to an arc, then it is guaranteed to not fail. Also, arcs with $a_k=0$ that are not attacked never fail. Thus, it was assumed that arcs $a_k =0$ cannot fail; hence, these arcs have fixed, not uncertain, capacity. Thus, increasing the number of arcs with $a_k>0$ increases the cardinality of the support $\randvecsupport$, i.e., the number of scenarios.

\subsection{Benchmark Method}\label{benchmark}
To assess the approach's effectiveness, we use computational results from solving the multilinear stochastic programming (MSP) formulation \eqref{mod:multi-program} using Gurobi. We used the standard approach of sequentially decomposing the nonlinear terms into multiple bilinear functions to address the nonlinearity issue in \eqref{mod:multi-program}. Then, we apply the McCormick envelopes to these terms. For every bilinear term, we introduce an auxiliary variable, and along with each auxiliary variable, we add four constraints based on McCormick's technique. The size of the resulting model is significantly larger due to the number of auxiliary variables, constraints added, and the fact that there are distinct copies of $\secondstagevec$ for each scenario. 

\subsection{Results}\label{results}
We applied the SRA to solve the stochastic network interdiction problem, described in  \S\ref{snip} on varying grid networks with equal numbers of rows and columns. The network grids are connected as shown in Fig \ref{fig:3} and have bi-directional arcs with the source nodes connected to all external nodes on the left side of the grid, and the target is connected to all the nodes on the rightmost side. We consider $3\times3, 4\times4,5\times5,6\times6,7\times7, 8\times8$  grid networks with $30, 56, 90, 132, 182, 240$ arcs, respectively, while varying the interdiction budget ($b^A$) and the number of allocation levels ($\numalloclevels$) for each network size. We implemented SRA in Gurobi using callback functions while using Gurobi to solve the second-stage problems. Since Gurobi doesn't allow the dynamic addition of variables during the branch-and-cut algorithm, we added all the required variables at the start but only used them when a new constraint with the variable was added to the model. All computational experiments were conducted on a server allocated with a single dedicated thread. We set the time limit to $1800s$ for experiments with network sized $3\times3$, $3600s$ for network sized $4\times4$, and $7200s$ for all other network sizes. 


\begin{figure}
\centering
  \includegraphics[width=0.75\linewidth]{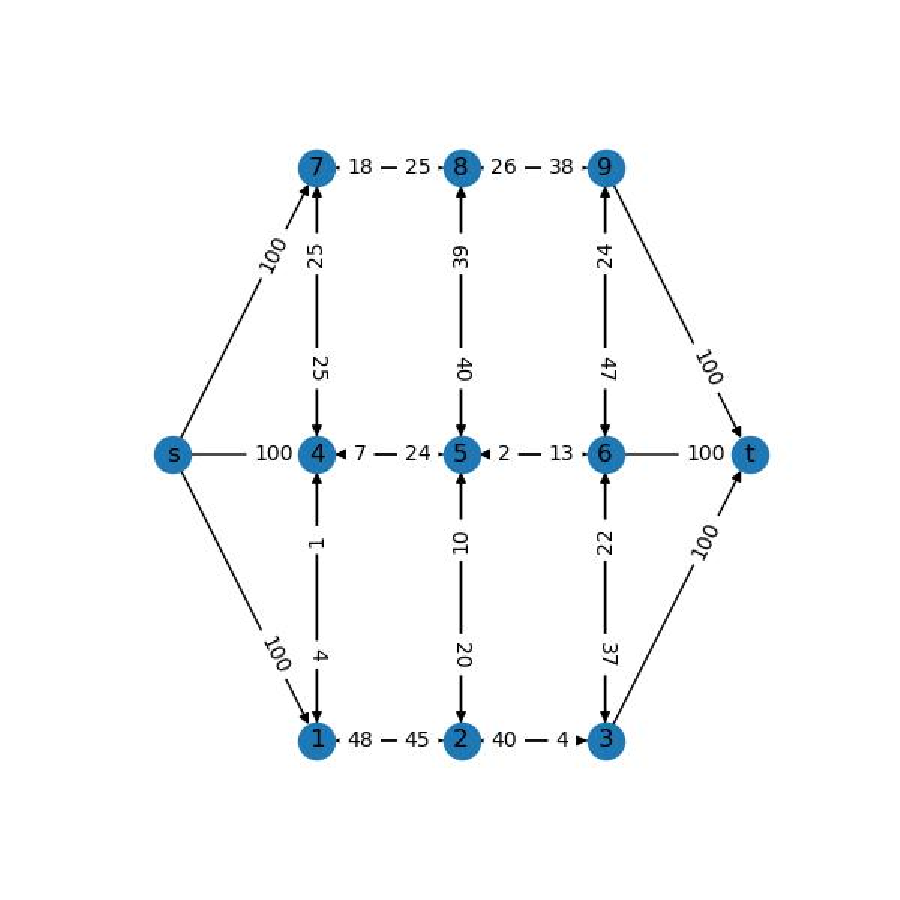}
\caption{A grid network with 3 rows and 3 columns}
\label{fig:3}
\end{figure}

Our study categorizes the experimental runs based on the way in which the parameter $a_k$ in \eqref{state-prob-fn} is computed:
\begin{itemize}
    \item \textbf{Det-Opt-Bin}: A deterministic tri-level defender-attacker-defender fortification model with binary fortification variables and a defender budget of $b^A$ was solved and the $a_k=\numalloclevels$ for arcs that are hardened in the defenders solution. This approach results in a small number of arcs with non-zero $a_k$.
    \item \textbf{Det-Opt}: A deterministic tri-level defender-attacker-defender fortification model with binary fortification variables and defender budget of $b^A\,\numalloclevels$ was solved and the $a_k=1$ for arcs that are hardened in the defenders solution. This approach results in more arcs with non-zero $a_k$.
    \item \textbf{Uniform}: $a_k = (b^A\,\numalloclevels)/|A'|$, where $A'$ is the subset of arcs in $A$ that can fail. This approach results in a all arcs having $a_k>0$, making the problem difficult to solve.
\end{itemize}


We average the results for each experiment over 5 randomly generated instances for each network size, budget ($b^A$), and number of allocation levels ($\numalloclevels$) combination. We note the model's gap when the runtime exceeds the set time limit. If an experiment finishes within this limit, we instead report its runtime. Table \ref{table: det-opt-bin-res} presents outcomes for both MSP and SRA approaches applied to the \textbf{Det-Opt-Bin} cases. Similarly, results for the \textbf{Det-Opt} model, under the same approaches, are detailed in Table \ref{table:det-opt-res}. For the \textbf{Uniform} case, only the SRA method's results appear in Table \ref{table: unif-res}, as the MSP model didn't complete these problems due to the fact that the large number of arcs with $a_k>0$ results in a large number of scenarios. Table \ref{table: det-opt-bin-res} reveals that the MSP method effectively solves the \textbf{Det-Opt-Bin} case for networks up to size 8, with budgets and protection levels from 2 to 4, within acceptable runtimes due to the fact that less arcs have $a_k>0$. SRA significantly outperforms MSP in speed and scalability for the same model. For the \textbf{Det-Opt} model, MSP's effectiveness is limited to smaller problems with a budget of 2, attributed to a larger number of arcs with $a_k>0$ and thus higher non-linearity. SRA, however, manages to solve or nearly solve (with gaps below $1\%$) most \textbf{Det-Opt} instances within time constraints. This efficiency extends to the \textbf{Uniform} cases, as shown in Table \ref{table: unif-res}, where SRA consistently achieves solutions or near-optimal results under similar conditions.

The results consistently indicate that the SRA algorithm needs more time to compute solutions as we expand the network in size or increase either the budget or the allocation levels. This is because larger networks have more arcs, adding complexity and nonlinearity to the problem. Such complexity deepens the refinement tree necessary for SRA, requiring more extensive refinement efforts to close the optimality gap. Higher budgets exacerbate this effect, diverging further from the mean value problem's optimal objective (referenced in \eqref{eq:mvp}), necessitating additional refinements to narrow the gap.

\begin{table}[htbp]
\centering
\caption{DET-OPT-BIN RESULTS}
\label{table: det-opt-bin-res}
    \begin{tabular}{|ccc|cc|ccc|}
        \hline
        \multicolumn{3}{|l|}{} &
          \multicolumn{2}{c|}{\textbf{MSP}} &
          \multicolumn{3}{c|}{\textbf{SRA}} \\ \hline
        \multicolumn{1}{|l|}{\textbf{rows $\times$ columns}} &
          \multicolumn{1}{l|}{$\textbf{b}^\textbf{A}$} &
          $\textbf{\numalloclevels}$ &
          \multicolumn{1}{c|}{\textbf{Runtime}} &
          \textbf{Solved} &
          \multicolumn{1}{c|}{\textbf{Runtime}} &
          \multicolumn{1}{c|}{\textbf{No of Refinements}} &
          \textbf{Solved} \\ \hline
        $3\times3$ & 2 & 2 & 0.1   & 5 & 0.3 & 0.8 & 5 \\
        $3\times3$ & 2 & 3 & 0.3   & 5 & 0.4 & 0.8 & 5 \\
        $3\times3$ & 2 & 4 & 1   & 5 & 0.3 & 0.8 & 5 \\
        $3\times3$ & 3 & 2 & 0.5   & 5 & 0.3 & 1.2 & 5 \\
        $3\times3$ & 3 & 3 & 0.4   & 5 & 0.3 & 1.6 & 5 \\
        $3\times3$ & 3 & 4 & 0.7   & 5 & 0.3 & 1.2 & 5 \\
        $3\times3$ & 4 & 2 & 3   & 5 & 0.5 & 3.6 & 5 \\
        $3\times3$ & 4 & 3 & 2   & 5 & 0.7 & 6.0 & 5 \\
        $3\times3$ & 4 & 4 & 3   & 5 & 0.8 & 5.0 & 5 \\ \hline
        $4\times4$ & 2 & 2 & 0.2   & 5 & 0.3 & 0.0 & 5 \\
        $4\times4$ & 2 & 3 & 0.9   & 5 & 0.3 & 0.0 & 5 \\
        $4\times4$ & 2 & 4 & 3   & 5 & 0.3 & 0.0 & 5 \\
        $4\times4$ & 3 & 2 & 1   & 5 & 0.4 & 0.4 & 5 \\
        $4\times4$ & 3 & 3 & 0.7   & 5 & 0.5 & 0.6 & 5 \\
        $4\times4$ & 3 & 4 & 2   & 5 & 0.7 & 0.6 & 5 \\
        $4\times4$ & 4 & 2 & 7   & 5 & 0.9 & 2.6 & 5 \\
        $4\times4$ & 4 & 3 & 5   & 5 & 1 & 3.8 & 5 \\
        $4\times4$ & 4 & 4 & 7   & 5 & 2 & 3.0 & 5 \\ \hline
        $5\times5$ & 2 & 2 & 0.4   & 5 & 0.5 & 0.0 & 5 \\
        $5\times5$ & 2 & 3 & 2   & 5 & 0.4 & 0.0 & 5 \\
        $5\times5$ & 2 & 4 & 8   & 5 & 0.4 & 0.0 & 5 \\
        $5\times5$ & 3 & 2 & 2   & 5 & 0.4 & 0.0 & 5 \\
        $5\times5$ & 3 & 3 & 1   & 5 & 0.4 & 0.0 & 5 \\
        $5\times5$ & 3 & 4 & 3   & 5 & 0.4 & 0.0 & 5 \\
        $5\times5$ & 4 & 2 & 19  & 5 & 1 & 1.8 & 5 \\
        $5\times5$ & 4 & 3 & 11  & 5 & 2 & 1.6 & 5 \\
        $5\times5$ & 4 & 4 & 37  & 5 & 4 & 1.4 & 5 \\ \hline
        $6\times6$ & 2 & 2 & 0.6   & 5 & 0.6 & 0.0 & 5 \\
        $6\times6$ & 2 & 3 & 3   & 5 & 0.6 & 0.0 & 5 \\
        $6\times6$ & 2 & 4 & 17  & 5 & 0.6 & 0.0 & 5 \\
        $6\times6$ & 3 & 2 & 3   & 5 & 0.6 & 0.0 & 5 \\
        $6\times6$ & 3 & 3 & 2   & 5 & 0.6 & 0.0 & 5 \\
        $6\times6$ & 3 & 4 & 6   & 5 & 0.6 & 0.0 & 5 \\
        $6\times6$ & 4 & 2 & 27  & 5 & 0.6 & 0.0 & 5 \\
        $6\times6$ & 4 & 3 & 201  & 5 & 0.7 & 0.2 & 5 \\
        $6\times6$ & 4 & 4 & 81  & 5 & 0.6 & 0.0 & 5 \\ \hline
        $7\times7$ & 2 & 2 & 0.7   & 5 & 0.8 & 0.0 & 5 \\
        $7\times7$ & 2 & 3 & 5   & 5 & 0.8 & 0.0 & 5 \\
        $7\times7$ & 2 & 4 & 23  & 5 & 0.8 & 0.0 & 5 \\
        $7\times7$ & 3 & 2 & 7   & 5 & 0.8 & 0.0 & 5 \\
        $7\times7$ & 3 & 3 & 3   & 5 & 0.8 & 0.0 & 5 \\
        $7\times7$ & 3 & 4 & 9   & 5 & 0.8 & 0.0 & 5 \\
        $7\times7$ & 4 & 2 & 41  & 5 & 0.8 & 0.0 & 5 \\
        $7\times7$ & 4 & 3 & 33  & 5 & 0.8 & 0.0 & 5 \\
        $7\times7$ & 4 & 4 & 168 & 5 & 0.8 & 0.0 & 5 \\ \hline
        $8\times8$ & 2 & 2 & 1   & 5 & 1 & 0.0 & 5 \\
        $8\times8$ & 2 & 3 & 9   & 5 & 1 & 0.0 & 5 \\
        $8\times8$ & 2 & 4 & 31  & 5 & 1 & 0.0 & 5 \\
        $8\times8$ & 3 & 2 & 8   & 5 & 1 & 0.0 & 5 \\
        $8\times8$ & 3 & 3 & 5   & 5 & 1 & 0.0 & 5 \\
        $8\times8$ & 3 & 4 & 14  & 5 & 1 & 0.0 & 5 \\
        $8\times8$ & 4 & 2 & 44  & 5 & 1 & 0.0 & 5 \\
        $8\times8$ & 4 & 3 & 49  & 5 & 1 & 0.0 & 5 \\
        $8\times8$ & 4 & 4 & 283 & 5 & 1 & 0.5 & 2 \\ \hline
    \end{tabular}
\end{table}

\begin{table}[htbp]
\centering
\caption{DET-OPT RESULTS}
\label{table:det-opt-res}
\begin{tabular}{|ccc|cc|cccc|}
    \hline
    \multicolumn{3}{|l|}{}               & \multicolumn{2}{c|}{\textbf{MSP}} & \multicolumn{4}{c|}{\textbf{REFINE}}                                           \\ \hline
    \textbf{rows $\times$ columns} & $\textbf{b}^\textbf{A}$ & $\textbf{\numalloclevels}$ & \textbf{Runtime}  & \textbf{Solved} & \textbf{Gap} & \textbf{Runtime} & \textbf{No of Refinements} & \textbf{Solved} \\ \hline
    $3\times3$ & 2 & 2 & 2   & 5 &      & 0.6    & 4.2   & 5 \\
    $3\times3$ & 2 & 3 & 41  & 5 &      & 1    & 9.8   & 5 \\
    $3\times3$ & 2 & 4 &        & 0 &      & 4    & 15.2  & 5 \\
    $3\times3$ & 3 & 2 &        & 0 &      & 1    & 10.2  & 5 \\
    $3\times3$ & 3 & 3 &        & 0 &      & 4    & 16.4  & 5 \\
    $3\times3$ & 3 & 4 &        & 0 &      & 19   & 22.6  & 5 \\
    $3\times3$ & 4 & 2 &        & 0 &      & 3    & 15.8 & 5 \\
    $3\times3$ & 4 & 3 &        & 0 &      & 6    & 22.0  & 5 \\
    $3\times3$ & 4 & 4 &        & 0 &      & 26   & 30.8  & 5 \\ \hline
    $4\times4$ & 2 & 2 & 7   & 5 &      & 0.5    & 1.2   & 5 \\
    $4\times4$ & 2 & 3 & 71  & 5 &      & 2    & 5.0   & 5 \\
    $4\times4$ & 2 & 4 &        & 0 &      & 14   & 16.2  & 5 \\
    $4\times4$ & 3 & 2 &        & 0 &      & 1    & 5.6   & 5 \\
    $4\times4$ & 3 & 3 &        & 0 &      & 65   & 31.4  & 5 \\
    $4\times4$ & 3 & 4 &        & 0 &      & 818  & 69.8  & 5 \\
    $4\times4$ & 4 & 2 &        & 0 &      & 8    & 22.8  & 5 \\
    $4\times4$ & 4 & 3 &        & 0 &      & 793  & 96.8  & 5 \\
    $4\times4$ & 4 & 4 &        & 0 & 0.82 & 2118 & 195.8 & 1 \\ \hline
    $5\times5$ & 2 & 2 & 12  & 5 &      & 0.7    & 0.8   & 5 \\
    $5\times5$ & 2 & 3 & 146 & 5 &      & 2    & 2.8   & 5 \\
    $5\times5$ & 2 & 4 &        & 0 &      & 24   & 11.2  & 5 \\
    $5\times5$ & 3 & 2 &        & 0 &      & 2    & 3.2   & 5 \\
    $5\times5$ & 3 & 3 &        & 0 &      & 79   & 16.4  & 5 \\
    $5\times5$ & 3 & 4 &        & 0 & 0.93 & 2100 & 45.4  & 4 \\
    $5\times5$ & 4 & 2 &        & 0 &      & 12   & 14.4  & 5 \\
    $5\times5$ & 4 & 3 &        & 0 &      & 2009 & 58.2  & 5 \\
    $5\times5$ & 4 & 4 &        & 0 & 1.00 & 0.4    & 51.0  & 4 \\ \hline
    $6\times6$ & 2 & 2 & 23  & 5 &      & 0.8    & 0.2   & 5 \\
    $6\times6$ & 2 & 3 &        & 0 &      & 2    & 1.6   & 5 \\
    $6\times6$ & 2 & 4 &        & 0 &      & 13   & 4.6   & 5 \\
    $6\times6$ & 3 & 2 &        & 0 &      & 2    & 2.0   & 5 \\
    $6\times6$ & 3 & 3 &        & 0 &      & 43   & 7.8   & 5 \\
    $6\times6$ & 3 & 4 &        & 0 & 0.78 & 2045 & 22.4  & 3 \\
    $6\times6$ & 4 & 2 &        & 0 &      & 8    & 5.2   & 5 \\
    $6\times6$ & 4 & 3 &        & 0 & 0.75 & 690  & 27.4  & 3 \\
    $6\times6$ & 4 & 4 &        & 0 & 1 &         & 62.4  & 0 \\ \hline
    $7\times7$ & 2 & 2 & 34  & 5 &      & 1    & 0.2   & 5 \\
    $7\times7$ & 2 & 3 &        & 0 &      & 2    & 0.8   & 5 \\
    $7\times7$ & 2 & 4 &        & 0 &      & 18   & 4.0   & 5 \\
    $7\times7$ & 3 & 2 &        & 0 &      & 2    & 1.0   & 5 \\
    $7\times7$ & 3 & 3 &        & 0 &      & 53   & 6.0   & 5 \\
    $7\times7$ & 3 & 4 &        & 0 & 0.81 & 2180 & 19.2  & 3 \\
    $7\times7$ & 4 & 2 &        & 0 &      & 14   & 5.2   & 5 \\
    $7\times7$ & 4 & 3 &        & 0 &      & 2872 & 18.8  & 5 \\
    $7\times7$ & 4 & 4 &        & 0 & 1 &         & 60.0  & 0 \\ \hline
    $8\times8$ & 2 & 2 &        & 0 &      & 1    & 0.0   & 5 \\
    $8\times8$ & 2 & 3 &        & 0 &      & 3    & 1.0   & 3 \\
    \hline
\end{tabular}
\end{table}

\begin{table}[htbp]
\centering
\caption{UNIFORM RESULTS FOR REFIMENT APPROACH}
\label{table: unif-res}
\begin{tabular}{|ccccccc|}
\hline
\textbf{rows $\times$ columns} & $\textbf{b}^\textbf{A}$ & $\textbf{\numalloclevels}$ & \textbf{Gap}   & \textbf{Runtime} & \textbf{No of Refinements} & \textbf{Solved} \\ \hline
$3\times3$ & 2 & 2 &       & 1    & 8.8              & 5      \\
$3\times3$ & 2 & 3 &       & 3    & 18.6             & 5      \\
$3\times3$ & 2 & 4 &       & 5    & 23.6             & 5      \\
$3\times3$ & 3 & 2 &       & 4    & 24.0             & 5      \\
$3\times3$ & 3 & 3 &       & 7    & 30.0             & 5      \\
$3\times3$ & 3 & 4 &       & 15   & 32.6             & 5      \\
$3\times3$ & 4 & 2 &       & 9    & 34.0             & 5      \\
$3\times3$ & 4 & 3 &       & 18   & 43.6             & 5      \\
$3\times3$ & 4 & 4 &       & 25   & 39.8             & 5      \\ \hline
$4\times4$ & 2 & 2 &       & 0.7    & 2.2              & 5      \\
$4\times4$ & 2 & 3 &       & 9    & 18.4             & 5      \\
$4\times4$ & 2 & 4 &       & 248  & 103.4            & 5      \\
$4\times4$ & 3 & 2 &       & 31   & 48.0             & 5      \\
$4\times4$ & 3 & 3 &       & 648  & 168.2            & 5      \\
$4\times4$ & 3 & 4 & 1 &         & 238.4            & 0      \\
$4\times4$ & 4 & 2 &       & 426  & 172.8            & 5      \\
$4\times4$ & 4 & 3 & 0.797 & 2404 & 241.2            & 2      \\
$4\times4$ & 4 & 4 & 1 &         & 255.4            & 0      \\ \hline
$5\times5$ & 2 & 2 &       & 2    & 3.8              & 5      \\
$5\times5$ & 2 & 3 &       & 30   & 22.2             & 5      \\
$5\times5$ & 2 & 4 &       & 1785 & 142.2            & 5      \\
$5\times5$ & 3 & 2 &       & 31   & 25.2             & 5      \\
$5\times5$ & 3 & 3 &       & 5548 & 216.8            & 5      \\
$5\times5$ & 3 & 4 & 1     & 0.5    & 165.6            & 1      \\
$5\times5$ & 4 & 2 &       & 1818 & 195.4            & 5      \\
$5\times5$ & 4 & 3 & 1     &         & 215.8            & 0      \\
$5\times5$ & 4 & 4 & 1      & 0    & 109.8              & 5      \\ \hline
$6\times6$ & 2 & 2 &       & 1    & 1.2              & 5      \\
$6\times6$ & 2 & 3 &       & 29   & 14.2             & 5      \\
$6\times6$ & 2 & 4 & 0.084 & 713  & 70.2             & 4      \\
$6\times6$ & 3 & 2 &       & 37   & 18.8             & 5      \\
$6\times6$ & 3 & 3 & 0.678 & 1719 & 120.0            & 1      \\
$6\times6$ & 3 & 4 & 1     & 1    & 62.2             & 2      \\
$6\times6$ & 4 & 2 &       & 2266 & 113.8            & 5      \\
$6\times6$ & 4 & 3 & 1     &         & 113.8            & 0  \\
$6\times6$ & 4 & 4 &       &         & 44.0             & 0      \\
\hline
$7\times7$ & 2 & 2 &       & 2    & 1.6              & 5      \\
$7\times7$ & 2 & 3 &       & 9    & 5.4              & 5      \\
$7\times7$ & 2 & 4 &       & 754  & 32.0             & 5      \\
$7\times7$ & 3 & 2 &       & 10    & 6.6              & 5      \\
$7\times7$ & 3 & 3 & 0.609 & 4381 & 105.6            & 1      \\
$7\times7$ & 3 & 4 & 1     &         & 79.0             & 0      \\
$7\times7$ & 4 & 2 &       & 1511 & 69.6             & 5      \\
$7\times7$ & 4 & 3 & 1     &         & 71.2             & 0      \\
$7\times7$ & 4 & 4 & 1     &         & 57.7             & 0      \\ \hline

\end{tabular}
\end{table}






\section{Conclusion}\label{conclusion}
This study presented a approach for solving a class of stochastic programming problems called \textit{stochastic programming with decision-dependent random capacities} (SP-DDRC). In this class of problems the second-stage problem includes a set of capacitated components, and the probability distribution of the capacity of each component depends on the first-stage variables. Thus, this class of problems is included in the class of stochastic programs with decision-dependent uncertainty. The SP-DDRC class of problems is very challenging to solve due to a large number of high-order multilinear terms. 

To alleviate the computational difficulty, we present a successive refinement algorithm (SRA) that is grounded in the fact that Jensen's inequality can be used to formulate a mean value problem that yields an elementary lower bound. The SRA starts with this elementary lower bound and refines the support of the random capacities dynamically within a branch-and-bound search.

We conducted a set of computational tests and found that the SRA effectively manages the computational complexities of endogenous probabilities in SP-DDRCs, demonstrating significant improvement in runtime and solution accuracy compared to the benchmark multilinear stochastic programming formulation. This is evident in our detailed analysis of network interdiction problems. SRA not only scaled with increasing network size and complexity but also obtained exact solutions in most cases under diverse budget and allocation scenarios. 

While this study assumed that the random capacity levels of each component were mutually statistically independent, it is possible to include dependence in the model by adding another source of randomness that is not decision-dependent. Let $\otherrandvar$ (with support $\Gamma$) denote this other source of randomness, which, in the context of network or facility failures, could represent the location and severity of a weather event. Then, we model the realizations of the capacity levels of components as mutually independent \textit{conditional on the realization of} $\otherrandvar$; i.e., $\probability[\staterealizvar \,|\, \firststageallocvec,\otherrandvar]$. The formulation is as follows.

\begin{mini}|s|
    {\firststageallocvec\in \firststageallocfeasregion}{\costallocvec^T\firststageallocvec + \sum_{\otherrandvar\in \Gamma} \probability[\otherrandvar] \sum_{\staterealizvar\in \randvecsupport} \probability[\staterealizvar \,|\, \firststageallocvec,\otherrandvar]\,\recfn(\staterealizvar)}{\label{mod:first-stage-cond-indep}}{}
\end{mini}

While in this study we reformulated the problem so that the allocation variables $\firststageallocvec$ were binary, it could be possible to implement a version of the successive refinement algorithm for problems with continuous or integer allocation variables. For these problems the McCormick envelope would not be exact for the linear programming relaxation. Thus, a spatial branch-and-bound strategy would be necessary. A challenge would be to determine how to refine the partition tree dynamically within a spatial brand-and-bound algorithm.

\newpage
\bibliographystyle{unsrt}      
\bibliography{ddu}   

\end{document}